\title{Cyclotomic torsion points in elliptic schemes}
\author{Michele Giacomini}
\date{}
\newcommand{\G}{\mathbb{G}}
\newcommand{\A}{\mathbb{A}}
\newcommand{\R}{\mathbb{R}}
\newcommand{\C}{\mathbb{C}}
\newcommand{\N}{\mathbb{N}}
\newcommand{\Z}{\mathbb{Z}}
\newcommand{\Q}{\mathbb{Q}}
\newcommand{\abs}[1]{\left\vert #1\right\vert}
\newcommand{\norm}[1]{\left\lVert #1\right\rVert}
\newcommand{\de}{\partial}
\newcommand{\frecciadx}[2]{\mathrel{\mathop{\longrightarrow}^{\mathrm{#1}}_{\mathrm{#2}}}}
\renewcommand{\bar}[1]{\overline{#1}}
\renewcommand{\tilde}[1]{\widetilde{#1}}
\DeclareMathOperator{\alg}{alg}
\DeclareMathOperator{\an}{an}
\DeclareMathOperator{\trdeg}{trdeg}
\newcommand*{\house}[1]{%
  \mathord{%
    \mathpalette\@house{#1}%
  }%
}
\newcommand*{\@house}[2]{%
  % #1: math style
  % #2: expression that gets the "house"
  % get the line width of `\overline' in the current math font size
  \dimen@=\fontdimen8 %
      \ifx#1\scriptscriptstyle\scriptscriptfont
      \else\ifx#1\scriptstyle\scriptfont
      \else\textfont\fi\fi
      3 %
  \sbox0{%
    $#1%
      \vrule width\dimen@\relax
      \overline{%
        \kern2\dimen@
        \begingroup % to keep changes of \dimen@ in #2 local
          #2%
        \endgroup
        \kern2\dimen@
      }%
      \vrule width\dimen@\relax
      \mathsurround=1.5\dimen@ % outside margin
    $%
  }%
  % TeX adds an empty space above `\overline', it needs to be
  % removed to get the correct height for the `\vrule's
  \ht0=\dimexpr\ht0-\dimen@\relax
  \dp0=\dimexpr\dp0+2\dimen@\relax
  \vbox{%
    \kern\dimen@ % reinsert previously removed space
    \copy0 %
  }%
}
\theoremstyle{definition}
\newtheorem{definition}{Definition}[section]
\theoremstyle{plain}
\newtheorem{theorem}[definition]{Theorem}
\newtheorem{lemma}[definition]{Lemma}
\newtheorem{proposition}[definition]{Proposition}
\newtheorem{corollary}[definition]{Corollary}
\theoremstyle{remark}
\newtheorem{remark}[definition]{Remark}
\newtheorem{example}[definition]{Example}
\begin{document}

\maketitle

\begin{abstract}
  An elliptic curve defined over a number field possesses only a finite number of torsion points defined over the cyclotomic closure of its field of definition. In analogy to the relative version of the Manin-Mumford conjecture stated by Masser and Zannier, we propose a family version of the above statement and prove it under a suitable integrality condition.
\end{abstract}

\tableofcontents

%!TEX root = Articolo.tex

\section{Introduction} % (fold)
\label{sec:introduction}

First of all we recall a known result about torsion points in elliptic curves, this follows form the famous theorem of Serre about Galois representations attached to elliptic curves \cite{Serre72}.

\begin{theorem}
\label{teo:cyclotomic_torsion_points_elliptic_curve}
	Let $E$ be an elliptic curve defined over a number field $k$. Then there are only a finite number of torsion points of $E$ which are defined over the cyclotomic closure $k^c$ of $k$\footnote{Let $\bar{k}$ be an algebraic closure of $k$, then $k^{c}$ is the field obtained from $k$ by adjoining all roots of unity contained in $\bar{k}$.}.
\end{theorem}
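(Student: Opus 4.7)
The plan is to derive the finiteness from Serre's open image theorem combined with the Weil pairing, working one prime at a time and then bounding the set of primes that can contribute. Fix a prime $\ell$ and consider the $\ell$-adic representation $\rho_\ell \colon \mathrm{Gal}(\bar{k}/k) \to \mathrm{GL}_2(\Z_\ell)$ attached to the Tate module $T_\ell E$. The Weil pairing identifies $\det \rho_\ell$ with the $\ell$-adic cyclotomic character, so since $\mathrm{Gal}(\bar{k}/k^c)$ is by definition contained in the kernel of every cyclotomic character, $\rho_\ell(\mathrm{Gal}(\bar{k}/k^c))$ lies inside $\mathrm{SL}_2(\Z_\ell)$.

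By Serre's theorem \cite{Serre72}, the image $\rho_\ell(\mathrm{Gal}(\bar{k}/k))$ is open in $\mathrm{GL}_2(\Z_\ell)$ in the non-CM case, or open in the Cartan subgroup of $\mathrm{GL}_2(\Z_\ell)$ attached to the CM field in the CM case. Intersecting with the kernel of the determinant shows that $\rho_\ell(\mathrm{Gal}(\bar{k}/k^c))$ is an open subgroup of $\mathrm{SL}_2(\Z_\ell)$, respectively of the norm-one subgroup of the Cartan. Such a subgroup has no nonzero fixed vector in $V_\ell E \cong \Q_\ell^2$: in the non-CM case this follows from Zariski density of an open subgroup of $\mathrm{SL}_2(\Z_\ell)$ inside $\mathrm{SL}_2(\Q_\ell)$ together with irreducibility of the standard representation, while in the CM case one checks it directly on the eigenlines of the Cartan. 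Consequently $(T_\ell E)^{\mathrm{Gal}(\bar{k}/k^c)} = 0$, and therefore $E[\ell^\infty](k^c)$ is finite for every prime $\ell$.

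Treating each $\ell$ separately is not enough on its own, so I would invoke the sharper form of Serre's theorem: for all but finitely many $\ell$, the mod-$\ell$ representation is surjective onto $\mathrm{GL}_2(\mathbb{F}_\ell)$ in the non-CM case, or onto the full mod-$\ell$ Cartan in the CM case. For such $\ell$, the mod-$\ell$ image of $\mathrm{Gal}(\bar{k}/k^c)$ contains $\mathrm{SL}_2(\mathbb{F}_\ell)$ (respectively the norm-one part of the Cartan), which acts on $\mathbb{F}_\ell^2 \setminus \{0\}$ without fixed vectors. Hence $E[\ell](k^c) = 0$, and therefore $E[\ell^\infty](k^c) = 0$, for almost all $\ell$. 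Combining the two steps, the decomposition $E_{\mathrm{tors}}(k^c) = \bigoplus_\ell E[\ell^\infty](k^c)$ exhibits the torsion as a finite direct sum of finite groups, hence finite.

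The main obstacle is the separate treatment of the CM case, in which Serre's theorem only provides a commutative target and the irreducibility argument used above is not available; one has to verify by hand that the norm-one part of a Cartan (in both the split and non-split situations, and both over $\Z_\ell$ and modulo $\ell$) has no nonzero fixed vector. This reduces to a short computation on the eigenlines and presents no genuine difficulty.
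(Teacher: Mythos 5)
Your overall strategy --- working one prime at a time via $T_\ell E$, invoking the sharper ``almost all $\ell$'' form of Serre's theorem, and then summing over $\ell$ --- is the standard and correct way to deduce this statement from \cite{Serre72} (the paper itself offers no argument beyond the citation, so there is no competing proof to compare with). However, there is a genuine gap at the central step, and it sits exactly where the content of the theorem lies. From $\det\rho_\ell=\chi_\ell$ and $\mathrm{Gal}(\bar{k}/k^c)\subseteq\ker\chi_\ell$ you only get the \emph{upper} bound $\rho_\ell(\mathrm{Gal}(\bar{k}/k^c))\subseteq \mathrm{SL}_2(\Z_\ell)$. ``Intersecting with the kernel of the determinant'' does not give the \emph{lower} bound you then use: $\mathrm{Gal}(\bar{k}/k^c)$ is the intersection of the kernels of \emph{all} the cyclotomic characters, hence strictly smaller than $\ker\chi_\ell$, so its image under $\rho_\ell$ is a priori an arbitrary normal subgroup of $\rho_\ell(G_k)\cap\mathrm{SL}_2(\Z_\ell)$ --- possibly the trivial one, which is precisely the scenario $k(E[\ell^\infty])\subseteq k^c$ that the theorem must exclude. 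In the non-CM case the repair is short but must be said: $\mathrm{Gal}(k^c/k)$ is abelian, so $\rho_\ell(\mathrm{Gal}(\bar{k}/k^c))$ contains the closure of the commutator subgroup of $\rho_\ell(G_k)$, which for an open subgroup of $\mathrm{GL}_2(\Z_\ell)$ is open in $\mathrm{SL}_2(\Z_\ell)$; mod $\ell$ it contains $[\mathrm{GL}_2(\mathbb{F}_\ell),\mathrm{GL}_2(\mathbb{F}_\ell)]=\mathrm{SL}_2(\mathbb{F}_\ell)$ for $\ell\geq 5$. This is the only place where the hypothesis ``abelian over $k$'' is actually used, and it is missing from your write-up.

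In the CM case the gap is more serious than your closing remark suggests, because the commutator argument yields nothing (the Cartan is abelian) and the ``short computation on the eigenlines'' only proves that a \emph{nontrivial} subgroup of the norm-one part of the Cartan has no nonzero fixed vector. It does not prove that $\rho_\ell(\mathrm{Gal}(\bar{k}/k^c))$, respectively its reduction mod $\ell$ for almost all $\ell$, is nontrivial, i.e.\ that the division fields $k(E[\ell^n])$ are not absorbed into $k^c$. That nontriviality is exactly the assertion that the anticyclotomic part of the CM division fields is not cyclotomic, and it requires a genuine extra input: for instance a ramification/conductor argument ($k(E[\ell])\cap k^c$ lies in a cyclotomic field $k(\mu_{\ell^a m_0})$ with $m_0$ bounded in terms of the bad primes, whose Galois group over $k$ has prime-to-$\ell$ part of order at most $(\ell-1)\phi(m_0)$, too small to accommodate a group of order about $(\ell-1)^2$), or Ribet's homothety argument from the appendix to the Katz--Lang paper, which is the standard reference for this finiteness statement. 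So the CM case is where the proof still has to be completed, not a routine verification.
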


In analogy to the relative formulation of the Manin-Mumford conjecture, proven by Masser and Zannier for products of elliptic schemes in \cite{MasserZannier10,MasserZannier12,MasserZannier14}; we are prompted to formulate a relative version of the previous result. That is, we expect that given an elliptic scheme $\mathcal{E}/\mathcal{C}$ and a non-torsion section $s:\mathcal{C}\to \mathcal{E}$ both defined over a number field $k$, there are only finitely many points $P\in \mathcal{C}(k^c)$ such that $s(P)\in \mathcal{E}_{P}$ is torsion, where $k^c$ is a cyclotomic closure of $k$. This finiteness expectation is supported by the fact that elliptic curves and the multiplicative group $\mathbb{G}_{m}$ are essentially unrelated; for example there are no non constant algebraic maps between an elliptic curve and $\mathbb{G}_{m}$. This suggests that it is unlikely that the field of definition for the torsion points of the multiplicative group, i.e. $\mathbb{Q}^{c}$, contains the coordinates of torsion points in an elliptic curve.

One of the aims of this paper is to solve this problem under a suitable integrality condition.

\begin{theorem}[(Main Result)]
\label{teo:main_result}
	Let $k$ be a number field and $C/k$ a curve. Let $\mathcal{E}/C$ be an elliptic scheme and let $s:C\to \mathcal{E}$ be a section which is not constantly torsion, both defined over $k$. Let $f:C\to \mathbb{A}^1$ be a non constant rational function defined over $k$. Finally, let $n$ be a natural number. Then there are only finitely many points $P\in C$ such that $f(P)$ is the sum of $n$ roots of unity and such that $s(P)\in\mathcal{E}_{P}$ is torsion.
\end{theorem}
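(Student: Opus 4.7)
My plan is to reformulate the statement as a relative Manin--Mumford problem on a semi-abelian scheme and then apply a Pila--Zannier type argument. First, introduce the auxiliary base
\[
W = \{(P, x_1, \ldots, x_n) \in C \times \mathbb{G}_m^n : f(P) = x_1 + \cdots + x_n\},
\]
an irreducible $n$-dimensional subvariety of $C \times \mathbb{G}_m^n$ defined over $k$. Over $W$ form the semi-abelian scheme $\mathcal{G} = (\mathcal{E} \times_C W) \times \mathbb{G}_m^n$ and the section
\[
\tau : W \longrightarrow \mathcal{G}, \qquad (P, \mathbf{x}) \longmapsto (s(P), \mathbf{x}).
\]
A point $P \in C$ satisfying the two hypotheses of the theorem lifts to a torsion value $(P, \zeta)$ of $\tau$, where $\zeta \in \mu_\infty^n$ is any $n$-tuple of roots of unity realising $f(P) = \sum_i \zeta_i$ and $s(P)$ is torsion in $\mathcal{E}_P$; it therefore suffices to show that the first-projection image in $C$ of the set of torsion values of $\tau$ is finite.

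Next I would verify a non-degeneracy condition, namely that $\tau(W)$ is not contained in any translate of a proper flat subgroup scheme of $\mathcal{G}/W$. Because there are no non-trivial algebraic homomorphisms between an elliptic scheme and $\mathbb{G}_m$, every such subgroup scheme splits as a product, reducing the analysis to two cases: (i) $s$ lies in a proper flat subgroup scheme of $\mathcal{E}$ up to a constant translate, which is excluded by the hypothesis that $s$ is not constantly torsion, and (ii) the projection of $W$ to $\mathbb{G}_m^n$ is contained in a translate of a proper subtorus, which is excluded because $W$ is cut out by the single relation $f(P) = \sum_i x_i$ and $f$ is non-constant.

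The core of the argument is then a Pila--Zannier strategy. The universal cover of the fibres of $\mathcal{G} \to W$ furnishes a Betti map $\beta$ whose restriction to $\tau(W)^{\mathrm{an}}$ is definable in the $o$-minimal structure $\mathbb{R}_{\mathrm{an},\exp}$, and the torsion values of $\tau$ correspond to rational points of $\beta \circ \tau$. The Pila--Wilkie counting theorem then bounds the number of such rational points of height at most $T$ by $O_\varepsilon(T^\varepsilon)$ for every $\varepsilon > 0$. To close the argument, one combines this with a Galois orbit lower bound for a torsion value $(P, \zeta)$: Serre's open image theorem controls $[k(P, s(P)) : k(P)]$ in terms of the order $N$ of $s(P)$, while $[k(\zeta) : k] \gg \varphi(M)$ (with $M$ the $\mathrm{lcm}$ of the orders of the $\zeta_i$) handles the cyclotomic factor. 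Matching these against the Pila--Wilkie upper bound forces both $N$ and $M$ to be bounded, after which the conclusion follows from the elementary observation that for each fixed $N$ the set $\{P \in C : N \cdot s(P) = 0\}$ is finite, since $N \cdot s$ is a non-trivial section of $\mathcal{E}/C$.

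The main obstacle I anticipate is obtaining the Galois orbit lower bound \emph{uniformly} in $P$: Serre's theorem gives a constant depending on the individual fibre $\mathcal{E}_P$, whereas here $P$ ranges over an a priori infinite set with $\mathcal{E}_P$ moving in a family. Overcoming this will require either a uniform Serre-type bound (in the vein of Masser--W\"ustholz and subsequent refinements) or a specialization argument that exploits the boundedness of $h_C(P)$ provided by the assumption that $f(P)$ is a sum of $n$ roots of unity. It is precisely this interplay between the elliptic and cyclotomic torsion sides, which embodies the integrality hypothesis of the theorem, that is likely to be the most delicate step.
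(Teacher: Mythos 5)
Your high-level strategy matches the paper's: set up the fibre product $W = C\times_{\mathbb{A}^1}\mathbb{G}_m^n$, combine an elliptic torsion condition with a cyclotomic one, construct a definable uniformization in $\mathbb{R}_{\mathrm{an},\exp}$, and play Pila--Wilkie against a Galois orbit lower bound. The worry you flag at the end about uniformity of the Galois lower bound is in fact a non-issue: the needed estimate (if $s(P)$ has order $T$ and $H(P)$ is bounded, then $[k(P):k]\gg T^{1/3}$) is Masser--Zannier's packaging of David's lower bounds combined with Silverman specialization, and it is uniform in the family; the paper simply cites it (Proposition \ref{prop:punti_torsione_schema_ellittico}). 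Also, the bounded height of $P$ comes from the torsion of $s(P)$, not from $f(P)$ being a sum of roots of unity, but this is a minor confusion.

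The genuine gap in your proposal is in the treatment of the algebraic part of the definable set. You verify a non-degeneracy condition for $\tau(W)$ and then assert that Pila--Wilkie ``bounds the number of such rational points by $O_\varepsilon(T^\varepsilon)$.'' But Pila--Wilkie only counts rational points \emph{outside} the algebraic part $\mathscr{S}^{\mathrm{alg}}$, and here the algebraic part is genuinely large. The paper calls this out explicitly as the main new difficulty: the logarithm set $\mathscr{S}\subset\mathbb{R}^2\times\mathbb{C}^n$ \emph{does} contain positive-dimensional semialgebraic subsets (coming from the $\mathbb{G}_m^n$ directions), and non-degeneracy of the section alone does not exclude your torsion points from lying on them. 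The paper's substitute for a clean Ax--Lindemann statement is Proposition \ref{prop:sottoinsiemi_semialgebrici_logaritmo}, which shows only that any connected semialgebraic subset of $\mathscr{S}$ has \emph{constant} projection to the elliptic Betti coordinates $\mathbb{R}^2$; proving this requires the transcendence-theoretic Lemma \ref{lemm:grado_trascendenza_logaritmi}, which invokes Brownawell--Kubota in the isoconstant case and differential Galois theory (via \cite{Bertrand89}) in general. From there, a torsion point $(P,\bm\varepsilon)$ on the algebraic part would produce, via Ax--Lindemann--Weierstrass for $\mathbb{G}_m^n$, a positive-dimensional translate of a subtorus through $\bm\varepsilon$ inside the affine hyperplane $x_1+\cdots+x_n=\zeta$. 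To rule this out the paper introduces the crucial reduction to $\bm\varepsilon$ having \emph{no vanishing subsum} (torsion points with a vanishing subsum are swept up by induction on $n$), and then Proposition \ref{prop:scrittura_come_somma_di_radici_di_1} shows such translates must be trivial. None of this machinery appears in your sketch, and without it the argument does not close: you have at best a bound on rational points off $\mathscr{S}^{\mathrm{alg}}$, with no control of the torsion points that might lie on $\mathscr{S}^{\mathrm{alg}}$.
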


We recall that the algebraic integers in $\mathbb{Q}^{c}$ are exactly the complex numbers that can be written as sum of roots of unity. Hence our main result indeed proves the expectation expressed above for a particular set of algebraic integers.

The proof follows the general lines of the one used by Pila and Zannier to prove the Manin-Mumford conjecture in \cite{PilaZannier08}, and later exploited by Masser and Zannier to analyse problems concerning relative Manin-Mumford. In particular, we follow closely the isotrivial case in the proof of relative Manin-Mumford for an extension of an elliptic curve by $\G_m$, given by \citeauthor{BertrandMasserPillayZannier16} in the paper \cite{BertrandMasserPillayZannier16}. Nevertheless, our case presents some issues which did not appear in the above cited case. The most important one is the presence of semialgebraic sets of dimension $\geq 1$ in the transcendental variety defined using logarithms.

Our main result constitutes a generalisation of the isoconstant case in the paper \cite{BertrandMasserPillayZannier16}. We now demonstrate a simple consequence of this fact; using the results of the above cited paper, one can prove the following proposition.

\begin{proposition}
\label{cor:matrici}
	Let $\mathcal{E}/\A^1$ be an elliptic scheme and let $s:\A^1\to \mathcal{E}$ be an algebraic section which is not constantly torsion, both defined over a number field $k$. Then there are only finitely many $\lambda\in \A^1$ such that 
	\begin{equation}
		\left(s(\lambda),\begin{pmatrix}0 & 1 \\ -1 & \lambda\end{pmatrix}\right)
	\end{equation}
	is torsion in $\mathcal{E}_{\lambda}\times SL_2(\C)$.
\end{proposition}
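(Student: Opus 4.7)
The plan is to deduce Proposition \ref{cor:matrici} directly from Theorem \ref{teo:main_result} after a short spectral analysis of the one-parameter family of matrices $M(\lambda):=\begin{pmatrix}0 & 1 \\ -1 & \lambda\end{pmatrix}$.

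First I would observe that $\mathcal{E}_{\lambda}\times SL_2(\C)$ is a direct product of groups, hence the pair $(s(\lambda),M(\lambda))$ is torsion if and only if each coordinate is torsion. Next, the characteristic polynomial of $M(\lambda)$ is $t^{2}-\lambda t+1$, so its eigenvalues come in a pair $\zeta,\zeta^{-1}$ satisfying $\zeta+\zeta^{-1}=\lambda$. If $M(\lambda)$ has finite order in $SL_2(\C)$ then these eigenvalues must be roots of unity, and consequently $\lambda$ is a sum of two roots of unity.

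Combining these two remarks, the set
\begin{equation*}
\Bigl\{\lambda\in\A^{1}:\bigl(s(\lambda),M(\lambda)\bigr)\text{ is torsion in }\mathcal{E}_{\lambda}\times SL_2(\C)\Bigr\}
\end{equation*}
is contained in the larger set
\begin{equation*}
\bigl\{\lambda\in\A^{1}:\lambda\text{ is a sum of two roots of unity and }s(\lambda)\in\mathcal{E}_{\lambda}\text{ is torsion}\bigr\}.
\end{equation*}
Applying Theorem \ref{teo:main_result} to $C=\A^{1}$, the given elliptic scheme $\mathcal{E}$ and section $s$, the function $f=\mathrm{id}_{\A^{1}}$ and the integer $n=2$ — whose hypotheses are satisfied since $s$ is not constantly torsion and $f$ is non-constant — one concludes that the second set, and therefore the first, is finite.

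I do not expect a genuine obstacle: all the content sits inside Theorem \ref{teo:main_result}, and the only step specific to the corollary is the elementary observation that any torsion element of $SL_2(\C)$ has roots of unity as eigenvalues, so that its trace is a sum of two roots of unity.
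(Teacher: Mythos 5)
Your argument is correct. The only content specific to the proposition is the observation that a torsion element of $SL_2(\C)$ has roots of unity as eigenvalues (if $M^N=I$ then every eigenvalue satisfies $\zeta^N=1$, and $\det M=1$ forces the pair $\zeta,\zeta^{-1}$), so that $\lambda=\operatorname{tr}M(\lambda)=\zeta+\zeta^{-1}$ is a sum of two roots of unity; together with the fact that torsion in a direct product is componentwise, this places the relevant $\lambda$ inside the set handled by Theorem \ref{teo:main_result} with $C=\A^1$, $f=\mathrm{id}$, $n=2$. This is exactly the reduction the paper itself indicates. The one nuance worth flagging is that the paper offers two routes and its \emph{nominal} proof is the other one: it attributes the proposition to the results of \cite{BertrandMasserPillayZannier16}, exploiting the sharper remark that $M(\lambda)$ is torsion only if $\lambda$ is a sum of two roots of unity \emph{and real} (since $\zeta^{-1}=\bar{\zeta}$ for a root of unity, $\lambda=2\operatorname{Re}\zeta$), which is what lets the earlier isoconstant results apply; the paper then observes, as you do, that Theorem \ref{teo:main_result} subsumes this by dropping the reality condition. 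Your derivation is therefore the cleaner and more self-contained of the two, at the cost of invoking the full strength of the main theorem where the paper's cited external result would suffice. You correctly use only the necessary direction of the torsion criterion, so no diagonalizability issue (e.g.\ at $\lambda=\pm2$, where $M(\lambda)$ is unipotent and non-torsion) affects the containment you need.
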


We observe that a matrix of the form $\left( \begin{smallmatrix}
	0 & 1 \\ -1 & \lambda
\end{smallmatrix} \right) $ is torsion in $SL_2(\C)$ if and only if $\lambda$ is sum of two roots of unity and is real. Using Theorem \ref{teo:main_result}, we can generalise this result, removing the assumption that $\lambda$ be real and allowing $\lambda$ to be an arbitrary sum of a fixed number of roots of unity, obtaining again finiteness.

\paragraph{Acknowledgements} % (fold)
\label{par:acknowledgements}

This article is the result of the work for my master's dissertation sustained at the University of Udine. I would like to thank my supervisor Professor Pietro Corvaja for introducing me to this topic, pointing me to the problem and all the invaluable help and discussions throughout the preparation of this paper.

\section{o-minimal structures and the Pila-Wilkie theorem} % (fold)
\label{sec:o_minimal_structures_and_the_pila_wilkie_theorem}

In this section we recall the definition of o-minimal structure and the statement of Pila-Wilkie's theorem on rational points in definable sets.

First we recall the definition of semialgebraic sets.

\begin{definition}
	\label{def:semialgebraic set}
	A subset $X$ of $\R^n$ is said to be \emph{semialgebraic} if it is a finite union of sets of the form:
	\begin{equation}
		\{\bm{x}\in\R^n | p_1(\bm{x})=\ldots=p_r(\bm{x})=0\text{ and }q_1(\bm{x}),\ldots,q_{s}(\bm{x})>0\},
	\end{equation}
	where $p_1\ldots,p_r,q_1,\ldots,q_s$ are polynomials.
\end{definition}

\begin{definition}
	\label{def:o-minimal structure}
	A \emph{structure} over $\mathbb{R}$ is a collection $\mathcal{S}=\{\mathcal{S}_n| n\in\N\}$ such that:
	\begin{enumerate}
		\item $\mathcal{S}_n$ is a boolean algebra of subsets of $\mathbb{R}^n$, i.e. if $S,S'\in \mathcal{S}$, $S\bigcup S'\in \mathcal{S}_n$ and $\mathbb{R}^n\setminus S\in \mathcal{S}_n$;
		\item if $S\in \mathcal{S}_n$ and $S'\in \mathcal{S}_m$, then $S\times S' \in \mathcal{S}_{n+m}$;
		\item if $A\in \mathcal{S}_{n+1}$, then $\pi(A)\in \mathcal{S}_n$, where $\pi:\mathbb{R}^{n+1}\to \mathbb{R}^n$ is the projection on the first $n$ coordinates;
		\item $\mathcal{S}_{n}$ contains all semialgebraic subsets of $\mathbb{R}^{n}$.
	\end{enumerate}
	The structure is called \emph{o-minimal} if, in addition to the conditions above, 
	\begin{enumerate}
		\setcounter{enumi}{4}
		\item The sets in $\mathcal{S}_1$ are exactly the finite unions of open intervals and points.
	\end{enumerate}
	We say that a set $A\subset \mathbb{R}^n$ is \emph{definable} if it belongs to $\mathcal{S}_n$; moreover a map $f:\mathbb{R}^n\to \mathbb{R}^m$ is \emph{definable} if its graph $\Gamma(f)$ belongs to $\mathcal{S}_{n+m}$.
\end{definition}

We now introduce the two examples of o-minimal structures we will be using in what follows.

\begin{example}[(Semialgebraic sets)]
	The collection $\mathcal{S}=\{\mathcal{S}_n| n\in\N\}$ where $\mathcal{S}_{n}$ is the set of semialgebraic sets of $\mathbb{R}^{n}$ is an o-minimal structure, thanks to Tarski's theorem on elimination of quantifiers (see \cite{Tarski48}).

	From the definition of o-minimal structure, one sees that this is the smallest possible o-minimal structure.
\end{example}

\begin{example}[($\mathbb{R}_{an,exp}$)]
	$\mathbb{R}_{an,exp}$ is the smallest o-minimal structure such that $\mathcal{S}_{2}$ contains the graph of the exponential function and, for each $n$, $\mathcal{S}_{n+1}$ contains the graph of all restricted analytic functions $f:\mathbb{R}^{n}\to \mathbb{R}$. Recall that the function $f$ is called restricted analytic if it is zero outside $[-1,1]^{n}$ and on $[-1,1]^{n}$ coincides with an analytic function defined on an open neighbourhood $U$ of $[-1,1]^{n}$. See \cite{DriesMacintyreMarker94} for the result stating that such an o-minimal structure exists.
\end{example}

From now on, we will always use definable to mean definable in the structure $\mathbb{R}_{an,exp}$.

We now recall some more definitions before stating Pila-Wilkie's Theorem.

\begin{definition}
	Let $H:\Q\to \R$ be the usual height function, $H\left(\frac{a}{b}\right)=\max\{\abs{a},\abs{b}\}$, where $a,b\in\Z$ and $(a,b)=1$. We extend $H$ to $\Q^n$ setting $H(x_1,\ldots,x_n)=\max_{i}H(x_i)$. If $X\subset \R^n$ we denote by $X(\Q)$ the subset of points of $X$ with rational coordinates. If $T\geq 1$ we define
	\begin{equation}
		X(\Q,T)=\{P\in X(\Q)| H(P)\leq T\}.
	\end{equation}
	Finally we define
	\begin{equation}
		N(X,T)=\#X(\Q,T).
	\end{equation}
\end{definition}

\begin{definition}
	Let $X\subset \R^n$. The \emph{algebraic part} of $X$, denoted by $X^{\alg}$, is the union of all connected semialgebraic subsets of positive dimension of $X$.
\end{definition}

\begin{theorem}[{\cite[Theorem 1.8]{PilaWilkie06}}]
	\label{thm:Pila-Wilkie}
	Let $X\subset \R^n$ be a definable set. Let $\varepsilon>0$. Then there exists a number $c(X,\varepsilon)>0$ such that
	\begin{equation}
		N(X\setminus X^{\alg},T)\leq c(X,\varepsilon)T^{\varepsilon}.
	\end{equation}
\end{theorem}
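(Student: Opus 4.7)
The plan is to combine a parametrization theorem with a determinant-method counting argument and then run an induction on dimension. The overall strategy mirrors the original proof of Pila and Wilkie, which extends to the o-minimal setting the counting technique developed by Bombieri and Pila for transcendental analytic curves.

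First I would reduce the problem to a more tractable situation using a definable cell decomposition: any definable $X\subset\R^n$ can be written as a finite disjoint union of definable cells, so it suffices to prove the bound for each cell, and in particular to bound $N(Y,T)$ for a definable set $Y$ of dimension $d$ together with an induction hypothesis that the theorem holds for definable sets of dimension $< d$. The base case $d=0$ is trivial (finitely many points). The next ingredient is the key analytic input, the \emph{Pila--Wilkie reparametrization theorem}: for every integer $r\geq 1$, a bounded definable set $Y$ of dimension $d$ can be covered by the images of finitely many definable $C^r$ maps $\phi_j:(0,1)^d\to Y$ such that all partial derivatives of $\phi_j$ of order $\leq r$ are uniformly bounded by $1$. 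This is a definable analogue of the Yomdin--Gromov algebraic lemma, and is the technically hardest step; it requires a delicate induction that exploits monotonicity and uniform definability in o-minimal structures.

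Once such parametrizations are in hand, I would apply a Bombieri--Pila style determinant argument to each $\phi_j$. The idea is that if $\phi_j$ is $C^r$ with bounded derivatives on a small subbox $B\subset(0,1)^d$, and if $B$ contains many preimages of rational points of height $\leq T$, then one can form a Vandermonde-like determinant in a suitable basis of monomials of bounded degree and show, by Taylor expansion, that this determinant is simultaneously small (by analytic bounds) and an integer multiple of a controlled denominator (by arithmetic). Choosing $r=r(d,n,\varepsilon)$ large enough forces this determinant to vanish, meaning that all such rational points lie on an algebraic hypersurface of bounded degree. A counting argument over subboxes gives that the rational points in the image of $\phi_j$ of height $\leq T$ are contained in $O(T^{\varepsilon})$ algebraic hypersurfaces $V_1,\ldots,V_M\subset\R^n$ of degree bounded in terms of $d,n,\varepsilon$.

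To conclude, I would intersect $Y$ with each $V_i$. The intersection $Y\cap V_i$ is definable of dimension strictly less than $d$ (if it had an interior in $Y$, positive-dimensional components would already be semialgebraic and therefore absorbed into $Y^{\alg}$). By the inductive hypothesis, $N\bigl((Y\cap V_i)\setminus(Y\cap V_i)^{\alg},T\bigr)\leq c_i T^{\varepsilon}$, and the positive-dimensional semialgebraic part is contained in $Y^{\alg}$. Summing over the $O(T^{\varepsilon})$ hypersurfaces yields a bound $O(T^{2\varepsilon})$, which by replacing $\varepsilon$ with $\varepsilon/2$ from the start gives the desired $O(T^{\varepsilon})$. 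The main obstacle, by a wide margin, is the reparametrization theorem: disentangling bad behaviour of definable functions near the boundary of cells, while keeping all parametrizing maps uniformly $C^r$ with controlled derivatives, is where the o-minimality hypothesis does essentially all of the work.
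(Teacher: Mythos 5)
This theorem is not proved in the paper at all: it is imported as a black box from \cite{PilaWilkie06} (their Theorem 1.8), so there is no internal proof to compare against. Judged as an outline of the actual Pila--Wilkie argument, your sketch has the right architecture --- cell decomposition, the $C^r$-reparametrization theorem in the style of Yomdin--Gromov, the Bombieri--Pila determinant method placing the rational points of height $\leq T$ on $O(T^{\varepsilon})$ hypersurfaces of bounded degree, and an induction on dimension with the semialgebraic pieces absorbed into $X^{\alg}$.

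There are, however, two genuine gaps in the inductive step as you have written it. First, the constants $c_i$ in $N\bigl((Y\cap V_i)\setminus(Y\cap V_i)^{\alg},T\bigr)\leq c_i T^{\varepsilon}$ depend on the hypersurfaces $V_i$, which themselves depend on $T$ and number about $T^{\varepsilon}$; you cannot sum these bounds unless the constant is uniform. This is why Pila and Wilkie prove the theorem in a stronger form, for definable \emph{families}: the sets $Y\cap V$, as $V$ ranges over hypersurfaces of degree $\leq k$, form a single definable family parametrized by the coefficient vectors, and the inductive hypothesis must deliver one constant valid for every fibre. Without this uniformity the induction does not close. Second, your claim that $Y\cap V_i$ has dimension strictly less than $d$ ``since positive-dimensional components would already be semialgebraic'' is not correct as stated: a full-dimensional definable intersection of $Y$ with an algebraic hypersurface need not be semialgebraic, so the case $\dim(Y\cap V_i)=\dim Y$ requires a separate (definable, uniform) treatment rather than being dismissed. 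Both points are resolved in the original paper, but they are precisely the places where a naive dimension induction breaks, so they deserve explicit attention in any proof sketch.
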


% section o_minimal_structures_and_the_pila_wilkie_theorem (end)

%!TEX root = Articolo.tex

\section{Proof of the Main Result} % (fold)
\label{sec:proof_of_the_main_result}

First we recall the statement of the main result.

\begin{theorem}
	\label{thm:main result}
	Let $k$ be a number field and $C/k$ a curve. Let $\mathcal{E}/C$ be an elliptic scheme and let $s:C\to \mathcal{E}$ be a section which is not constantly torsion, both defined over $k$. Let $f:C\to \mathbb{A}^1$ be a non constant rational function defined over $k$. Finally, let $n$ be a natural number. Then there are only finitely many points $P\in C$ such that $f(P)$ is the sum of $n$ roots of unity and such that $s(P)\in\mathcal{E}_{P}$ is torsion.
\end{theorem}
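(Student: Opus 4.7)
The plan is to adapt the Pila--Zannier strategy to our setting. Fix a simply connected relatively compact open set $D\subset C(\C)$ avoiding the finitely many bad points (singularities of $\mathcal{E}/C$ and poles of $f$), and choose analytic functions $\omega_1(P),\omega_2(P)$ generating the period lattice of $\mathcal{E}_P$ together with an analytic lift $\ell_s(P)\in\C$ of $s(P)$ through the exponential uniformisation. Writing $\ell_s(P)=a(P)\omega_1(P)+b(P)\omega_2(P)$ with $a(P),b(P)\in\R$, the torsion condition on $s(P)$ becomes $a(P),b(P)\in\Q$, with common denominator dividing the torsion order of $s(P)$.

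Next I would introduce the set
\begin{equation*}
	Z=\Bigl\{\bigl(a(P),b(P),\theta_1,\ldots,\theta_n\bigr)\in\R^{n+2}:\ P\in D,\ \theta_j\in[0,1),\ f(P)=\sum_{j=1}^{n}e^{2\pi i\theta_j}\Bigr\},
\end{equation*}
which, after identifying $D$ with a bounded real-analytic chart, is definable in $\R_{\an,\exp}$ because the restricted periods, the logarithm $\ell_s$, and the exponentials are all definable. Each $P$ satisfying the two hypotheses contributes a rational point of $Z$ of height bounded by $T_P=\max(N_P,M_P)$, where $N_P$ is the torsion order of $s(P)$ and $M_P$ the lcm of the orders of the roots of unity in the decomposition of $f(P)$. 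Covering $C$ by finitely many such charts captures every relevant $P$.

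The arithmetic input is a Galois-theoretic lower bound for $[k(P):k]$ in terms of $T_P$. Combining Serre's theorem on the Galois image on the torsion of $\mathcal{E}$ with $\phi(M_P)\gg M_P^{1-\varepsilon}$ for the cyclotomic degree, and using that $f$ is non-constant so that $[k(P):k]\geq[k(f(P)):k]/\deg f$, one expects to obtain, after a fixed finite extension of $k$, a bound $[k(P):k]\gg T_P^{\delta}$ for some positive $\delta$. The Galois conjugates $P^{\sigma}$, which again satisfy both hypotheses, lift to distinct rational points of $Z$ of the same height $T_P$. By the Pila--Wilkie Theorem~\ref{thm:Pila-Wilkie}, at most $c(\varepsilon)T_P^{\varepsilon}$ of these can lie in $Z\setminus Z^{\alg}$, so for $T_P$ large enough they are forced into $Z^{\alg}$.

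The principal obstacle, flagged by the author in the introduction, is the analysis of $Z^{\alg}$. The map $(\theta_1,\ldots,\theta_n)\mapsto\sum_j e^{2\pi i\theta_j}$ has positive-dimensional fibres as soon as $n\geq 2$ (one can perturb pairs $(\theta_j,\theta_k)$ on a real-analytic arc while preserving the sum), so $Z$ genuinely contains connected semialgebraic subsets of positive dimension and $Z^{\alg}$ is non-trivial. I expect the decisive step to be the following dichotomy for a positive-dimensional semialgebraic piece $W\subset Z$: either $W$ projects non-trivially onto the $(a,b)$-plane, in which case an Ax--Lindemann type argument together with the transcendence of $\ell_s$ as a function on $D$ (which uses crucially that $s$ is not constantly torsion) reduces matters to the isoconstant case already handled in \cite{BertrandMasserPillayZannier16}; or $(a,b)$ is constant on $W$ while the $\theta_j$'s vary, in which case $f(P)$ would take the same value along an arc of $P$'s and thus be constant, contradicting the assumption on $f$. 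In either case only finitely many $P$ can contribute rational points inside $Z^{\alg}$, and combining this with Pila--Wilkie outside $Z^{\alg}$ yields the claimed finiteness.
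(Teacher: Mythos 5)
Your general setup (compact domain, Betti coordinates plus logarithms of the roots of unity, Galois lower bound via Masser--Zannier and cyclotomic degrees, then Pila--Wilkie) matches the paper's, and your diagnosis that $Z^{\alg}$ is non-trivial for $n\geq 2$ is exactly right. The gap is in how you dispose of $Z^{\alg}$: both horns of your dichotomy are flawed, and a crucial reduction is missing.

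Consider your second case, ``$(a,b)$ constant on $W$ while the $\theta_j$ vary.'' When $(a,b)$ is constant, the point $P$ is (generically) locally \emph{constant}, because the Betti map $P\mapsto(a(P),b(P))$ has discrete fibres away from a small exceptional set. So $W$ lives over a single $P_0$, and its positive dimension comes entirely from deforming $(\theta_1,\dots,\theta_n)$ inside the fibre $\{\sum_j e^{2\pi i\theta_j}=f(P_0)\}$. There is no ``arc of $P$'s'' and no contradiction with $f$ being non-constant: $f$ is non-constant as a function on $C$, but here $P$ is fixed. Worse, this is precisely the dangerous case for the Pila--Wilkie counting, because the Galois conjugates of a point $(P_0,\boldsymbol\varepsilon)$ (with, say, $\varepsilon_1+\varepsilon_2=0$, so one can continuously rotate $\varepsilon_1$ and compensate with $\varepsilon_2=-\varepsilon_1$) can all fall into $Z^{\alg}$, and then the theorem gives you nothing. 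The paper removes this obstruction by first reducing, essentially by induction on $n$, to tuples $\boldsymbol\varepsilon$ for which no sub-sum $\sum_{i\in I}\varepsilon_i$ vanishes; it then shows (Proposition~\ref{prop:scrittura_come_somma_di_radici_di_1}, via Artin's theorem on independence of characters) that for such $\boldsymbol\varepsilon$ the hyperplane $x_1+\cdots+x_n=\Sigma(\boldsymbol\varepsilon)$ contains no positive-dimensional coset through $\boldsymbol\varepsilon$, and combines this with Ax--Lindemann for $\G_m^n$ to rule out a positive-dimensional semialgebraic piece over the fixed point $P_0$ (Proposition~\ref{prop:sottoinsiemi_semialgebrici_V_zeta}). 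Your proposal contains neither the no-vanishing-sub-sum reduction nor the coset analysis, and without them this case cannot be closed.

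Your first case is also not justified as stated: ``reduces matters to the isoconstant case already handled in \cite{BertrandMasserPillayZannier16}'' cannot be right, since the theorem is about arbitrary elliptic schemes and there is no mechanism in your argument to deform a general $\mathcal{E}/C$ to an isotrivial one. The paper instead proves directly (Proposition~\ref{prop:sottoinsiemi_semialgebrici_logaritmo}) that a connected semialgebraic subset of the logarithmic image has constant $(a,b)$-projection, via a transcendence statement for $\omega_1,\omega_2,\log_E,\log\circ p_1,\dots,\log\circ p_i$ (Lemma~\ref{lemm:grado_trascendenza_logaritmi}). That lemma is proved by a Picard--Vessiot computation with differential Galois groups ($SL_2(\C)$ acting on $\G_a^2\oplus\G_a^i$) in the non-isoconstant case, and by Brownawell--Kubota's theorem in the isoconstant case; these are distinct inputs, not a reduction of one to the other. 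So the Ax--Lindemann theorem is used where it applies (on the $\G_m^n$ side), while the elliptic side needs a genuinely different transcendence ingredient that your sketch does not supply.
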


First we observe that, up to removing a finite number of points from $C$, we may assume that it is an affine non singular curve and the function $f:C\to \mathbb{A}^{1}$ is regular. 

The basic idea of the proof strategy is to construct a set definable in some o-minimal structure whose rational points correspond to the points we are interested in; then to compare the upper bound on the number of this points given by  Pila-Wilkie's Theorem \ref{thm:Pila-Wilkie} with a lower bound coming from Galois theoretic information.

Now, we fix notation for the following sections.
\begin{itemize}
	\item We keep the same notation as in Theorem \ref{thm:main result}. Namely, $k$ is a number field, $C$ a curve, which as above we may assume affine and non singular, $\mathcal{E}/C$ is an elliptic scheme, $s:\A^1\to \mathcal{E}$ is a section which is not constantly torsion and $f:C\to \mathbb{A}^1$ is a regular function, all defined over $k$. Finally, $n$ is a fixed natural number.
	\item $P$ will always denote a point in $C$.
	\item $\bm{x}=(x_1,\ldots,x_n)$ and $\bm{\varepsilon}=(\varepsilon_1,\ldots,\varepsilon_n)$ will denote a points in $\G_m^n$ and $\bm{\varepsilon}$ will always be assumed torsion.
	\item By $c_1,c_2,\ldots$ we will denote positive real numbers depending on the data of the theorem, that is: the elliptic scheme $\mathcal{E}/C$, the section $s$, the function $f$ and the natural number $n$. 
	\item Given a scheme $\mathcal{S}$ we will denote by $\mathcal{S}^{\an}$ the analytic space defined by it.
	\item Fix an embedding $C\subset \mathbb{A}^{N}$ for some $N$. $H$ is the absolute height function on $C$ associated to this affine embedding.
	\item We define the map 
	\begin{equation}
		\Sigma:\G_m^n\to \mathbb{A}^1
	\end{equation}
	as $\bm{x}\mapsto \sum_{i=1}^{n}x_{i}.$
	\item Consider the fibre products $C\times_{\mathbb{A}^{1}}\mathbb{G}_{m}^{n}$ and $\mathcal{E}\times_{\mathbb{A}^{1}}\mathbb{G}_{m}^{n}$ taken with respect to the maps $f$ and $\Sigma$. Define the map 
	\begin{equation}
	\sigma:C\times_{\mathbb{A}^{1}}\mathbb{G}_{m}^{n}\to \mathcal{E}\times_{\mathbb{A}^{1}}\mathbb{G}_{m}^{n}	
	\end{equation}
	to be the section $s$ on the first component and the identity in the $\mathbb{G}_{m}^{n}$ component.
\end{itemize}

\subsection{Algebraic Lower Bounds} % (fold)
\label{sub:algebraic_lower_bounds}

The results of this section are consequences of the work done by Masser and Zannier on the subject. We keep notation as above.

Te following Proposition is a compound of result proven by \citeauthor{MasserZannier12}, see for example \cite{MasserZannier12} or \cite{Zannier12}, and is a consequence of work by Silverman and David.

\begin{proposition}[Proposition 1, Section 3.1 in \cite{ BertrandMasserPillayZannier16}]
\label{prop:punti_torsione_schema_ellittico}
	Let $\mathcal{E}/C$ be an elliptic scheme and $s:C\to \mathcal{E}$ a section, which is not constantly torsion, both defined over a number field $k$. Then there are two positive real numbers $c_1$ and $c_2$, depending only on $C/k$, $\mathcal{E}/C$ and $s$, with the following properties. Let $P\in C$ be a point such that $s(P)\in \mathcal{E}_{P}$ is torsion, then:
	\begin{enumerate}
		\item $P$ is algebraic;
		\item the height $H(P)$ is bounded from above by $c_1$;
		\item if $T$ is the order of $s(P)$, then $[k(P):k]\geq c_2 T^{1/3}$.
	\end{enumerate}
\end{proposition}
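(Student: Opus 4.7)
The plan is to treat the three assertions in turn, relying on specialisation results and degree estimates that are by now standard in this setting.

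For (1), I would fix a positive integer $N$ and consider the $k$-morphism $[N]\circ s : C\to \mathcal E$ obtained by composing $s$ with multiplication by $N$ on $\mathcal E$. The hypothesis that $s$ is not constantly torsion means that $[N]\circ s$ is not the zero section for any $N$, so the scheme-theoretic preimage of the zero section of $\mathcal E/C$ under $[N]\circ s$ is a proper closed subscheme of $C$ defined over $k$, necessarily of dimension zero. Any $P$ with $s(P)$ of order dividing $N$ lies in this finite subscheme, hence is algebraic over $k$.

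For (2), I would invoke Silverman's specialisation theorem for elliptic schemes. With respect to the Weil height $h$ on $C$ coming from the fixed affine embedding, one has an asymptotic of the form
\[
\hat h_{\mathcal E_P}\bigl(s(P)\bigr) = \hat h_{k(C)}(s)\cdot h(P) + O\bigl(h(P)^{1/2}+1\bigr)\quad\text{as }h(P)\to \infty,
\]
where $\hat h_{k(C)}(s)$ is the N\'eron--Tate height of the generic section over the function field. Because $s$ is not constantly torsion, $\hat h_{k(C)}(s)>0$ by the function-field Lang--N\'eron theorem. If $s(P)$ is torsion then $\hat h_{\mathcal E_P}(s(P))=0$, and the displayed asymptotic forces $h(P)$, and hence $H(P)$, to stay bounded by some $c_1$ depending only on $\mathcal E/C$ and $s$.

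For (3), once $H(P)$ is bounded, the $j$-invariant $j(\mathcal E_P)$ has bounded Weil height as $P$ varies in the torsion locus, so the fibres $\mathcal E_P$ lie in a family of elliptic curves of uniformly bounded (Faltings) height. I would then apply a uniform lower bound on the degree of torsion points of Masser--David type: for an elliptic curve $E$ over $\bar{\Q}$ whose height is bounded by a fixed constant and any torsion point $Q\in E$ of exact order $T$, one has $[\Q(E,Q):\Q]\geq c\, T^{1/3}$ with $c$ depending only on that bound. Since $\mathcal E_P$ and $s(P)=Q$ are both defined over $k(P)$, this yields $[k(P):\Q]\geq c\, T^{1/3}$, and dividing by $[k:\Q]$ gives (3) after renaming the constant $c_2$.

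The only genuinely deep input is the Masser--David estimate cited in (3); the other two steps are essentially formal once the right specialisation theorem is invoked. The one point requiring care is uniformity of the constants in the family: the height bound (2) has to be established before (3), since it is precisely what makes the Masser--David estimate applicable uniformly over all relevant fibres $\mathcal E_P$.
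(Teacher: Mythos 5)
Your sketch is essentially the intended argument: the paper does not actually prove this proposition but imports it verbatim from \cite{BertrandMasserPillayZannier16}, whose proof is exactly the combination you describe — algebraicity from the finiteness of the $N$-torsion locus of the section, boundedness of height from Silverman's specialisation theorem, and the degree bound from David's uniform lower bound for the order of torsion points on elliptic curves of bounded height (in the non-circular form $T\ll_h D^{3}$ with $D$ the absolute degree of the field generated by the curve and the point). The one step where your write-up is slightly too quick is the positivity of the generic canonical height: Lang--N\'eron gives $\hat{h}_{k(C)}(s)>0$ only for sections that are non-torsion \emph{modulo the constant part}, so the isotrivial case must be split off — if after a finite base change $\mathcal{E}$ becomes constant and $s$ becomes a constant non-torsion point, then $\hat{h}_{k(C)}(s)=0$ and Silverman's theorem says nothing, but the torsion locus is then empty and all three assertions hold vacuously, while a non-constant section of a constant family has positive height for degree reasons. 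With that case distinction added, the proposal is complete and coincides with the source's proof.
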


Now we add the multiplicative component we will need in the sequel.

\begin{corollary}
\label{cor:punti_torsione_EGm2}
	With notation as above, let $\bm{\varepsilon}\in \G_m^n$ and $P\in C$ such that $s(P)\in\mathcal{E}_{P}$ and $\bm{\varepsilon}$ are torsion. Let $T$ be the order of $(\bm{\varepsilon},s(P))$ in $\G_m^n \times \mathcal{E}_{P}$, then there is a positive real number $c_1$, depending only on $C/k$, $\mathcal{E}/C$, $s$ and $n$, such that $[k(\bm{\varepsilon},P):k]\geq c_1 T^{1/6}$.
\end{corollary}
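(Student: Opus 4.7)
The plan is to split the combined order $T$ into its multiplicative part $T_1 := \operatorname{ord}(\bm{\varepsilon})$ and its elliptic part $T_2 := \operatorname{ord}(s(P))$, and to handle each separately. Since $\G_m^n\times \mathcal{E}_P$ is a direct product of two abelian groups, the order of $(\bm{\varepsilon},s(P))$ is simply $T = \operatorname{lcm}(T_1,T_2)$, whence $T \leq T_1T_2$ and therefore $\max(T_1,T_2)\geq T^{1/2}$. The lower bound for $[k(\bm{\varepsilon},P):k]$ will then come from whichever of the two factors is responsible for making $T$ large.

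First I would treat the case $T_2 \geq T^{1/2}$. Here Proposition \ref{prop:punti_torsione_schema_ellittico} applied to the point $P$ yields $[k(P):k] \geq c_2 T_2^{1/3} \geq c_2 T^{1/6}$, and since $k(P)\subseteq k(\bm{\varepsilon},P)$ the bound transfers to the desired field. The case $T_1 \geq T^{1/2}$ I would handle cyclotomically. Writing $m_i$ for the order of $\varepsilon_i$, we have $T_1 = \operatorname{lcm}(m_1,\dots,m_n)$, and the classical identity $\Q(\zeta_{m_1},\dots,\zeta_{m_n}) = \Q(\zeta_{T_1})$ shows that $k(\bm{\varepsilon})$ contains a primitive $T_1$-th root of unity $\zeta_{T_1}$. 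Consequently
\[
[k(\bm{\varepsilon},P):k] \geq [k(\zeta_{T_1}):k] \geq \frac{[\Q(\zeta_{T_1}):\Q]}{[k:\Q]} = \frac{\varphi(T_1)}{[k:\Q]},
\]
and the elementary estimate $\varphi(N) \gg N^{1/2}$ combined with $T_1 \geq T^{1/2}$ produces a bound of the form $c\,T^{1/4}$, which already exceeds the required $T^{1/6}$.

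I do not expect any genuine obstacle: the argument is essentially bookkeeping built on top of Proposition \ref{prop:punti_torsione_schema_ellittico} together with standard cyclotomic degree bounds. The exponent $1/6$ is exactly what the crude splitting $T \leq T_1T_2$ gives once it is combined with the $1/3$ of the proposition, and there seems to be no need to sharpen it for the o-minimal comparison used later in the paper.
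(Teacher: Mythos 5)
Your proof is correct and follows essentially the same route as the paper's: Proposition \ref{prop:punti_torsione_schema_ellittico} for the elliptic factor, the lower bound $\varphi(N)\gg N^{1/2}$ on cyclotomic degrees for the multiplicative factor, and a comparison of the two. The paper merely organizes the bookkeeping slightly differently (writing $T=(T/h)\cdot h$ with $h=\operatorname{ord}(s(P))$ and bounding $\max\{T/h,h\}\geq T^{1/2}$ instead of splitting into cases via $T\leq T_1T_2$), arriving at the same exponent $1/6$.
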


\begin{proof}
	Let $h$ be the order of $s(P)\in \mathcal{E}_{P}$, then $\bm{\varepsilon}^h$ has order $T/h$. We have that
	\begin{equation}
		[k(\bm{\varepsilon}):k]\geq \frac{1}{[k:\Q]}\phi \left( \frac{T}{h} \right) \geq \frac{1}{[k:\Q]\sqrt{2}}\sqrt{\frac{T}{h}}=c_3 \left( \frac{T}{h}\right)^{\frac{1}{2}},
	\end{equation}
	Where $\phi$ is Euler totient function and we have used $\phi(x)\geq \sqrt{\frac{x}{2}}$.
	Moreover from the previous proposition, we have that there is a number $c_4$ such that $[k(P):k]\geq c_4 h^{1/3}$. Putting $c_1=\min \left\{ c_3,c_4 \right\}$, we calculate
	\begin{equation}
	\begin{split}
			\left[ k(\bm{\varepsilon},P):k \right] %
			&\geq \max \left\{ [ k(\varepsilon^h):k ], \left[ k(P):k \right]   \right\}%
			\geq \max \left\{ c_3 \left( \frac{T}{h} \right)^{\frac{1}{2}}, c_4 h^{\frac{1}{3}}  \right\} \\
			&\geq c_1 \max \left\{ \frac{T}{h}, h \right\}^{\frac{1}{3}} %
			\geq c_1 T^{\frac{1}{6}}. 
	\end{split}
	\end{equation}
\end{proof}

\begin{corollary}
\label{cor:grado_punti_torsione}
	With notation as above, there are two positive real numbers $c_1$ and $c_2$, depending only on $C/k$, $\mathcal{E}/C$, $s$ and $n$, such that, for all $(P,\bm{\varepsilon})\in C\times\G_m^n$ with $\sigma(P,\bm{\varepsilon})\in \mathcal{E}_{P}\times\G_m^n$ torsion of order $T$, the following bound holds:
	\begin{equation*}
		[k(P,\bm{\varepsilon}):k]\geq c_1 T^{c_2}.
	\end{equation*}
\end{corollary}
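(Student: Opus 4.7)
The plan is to recognize that this corollary is essentially a repackaging of Corollary \ref{cor:punti_torsione_EGm2} with a generic polynomial lower bound $c_1 T^{c_2}$ in place of the explicit $c_1 T^{1/6}$. First I would verify that the hypotheses match up. The assumption that $\sigma(P,\bm{\varepsilon}) = (s(P), \bm{\varepsilon})$ is torsion in $\mathcal{E}_{P}\times \G_m^n$ is equivalent, coordinate by coordinate, to asking that $s(P)\in \mathcal{E}_{P}$ be torsion and that $\bm{\varepsilon}\in \G_m^n$ be torsion; and the order $T$ of $(s(P),\bm{\varepsilon})$ in $\mathcal{E}_{P}\times \G_m^n$ is invariant under swapping the factors, so it coincides with the order of $(\bm{\varepsilon},s(P))$ in $\G_m^n\times \mathcal{E}_{P}$ as used in the previous corollary.

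With the hypotheses aligned, Corollary \ref{cor:punti_torsione_EGm2} immediately gives $[k(P,\bm{\varepsilon}):k]\geq c_1' T^{1/6}$, where $c_1'$ is the constant produced there, which depends only on $C/k$, $\mathcal{E}/C$, $s$ and $n$. Setting $c_1=c_1'$ and $c_2=1/6$ yields the desired inequality with constants of the allowed dependence.

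I do not expect any real obstacle here: the purpose of stating the lower bound in this form is essentially cosmetic. In the subsequent Pila-Wilkie type argument only the existence of \emph{some} positive exponent in $T$ will matter, and it will be notationally convenient to refer to it by a neutral symbol $c_2$ rather than carry around the explicit value $1/6$.
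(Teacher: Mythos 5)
Your proposal is correct and matches the paper's intent: the corollary is stated without proof precisely because it is an immediate restatement of Corollary \ref{cor:punti_torsione_EGm2} with $c_2=1/6$, and your verification that the order of the torsion point is unaffected by swapping the factors is the only (trivial) point to check.
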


As it is, the Galois theoretic information contained in the last corollary is not sufficient. We will need to construct a subset $S$ of $(C\times_{\mathbb{A}^{1}}\mathbb{G}_{m}^{n})(\mathbb{C})$, compact in the euclidean topology, with the following property: $S$ avoids the points of bad reduction for the scheme $\mathcal{E}\times_{\mathbb{A}^{1}}\mathbb{G}_{m}^{n}/C\times_{\mathbb{A}}^{1}\G_m^n$ and such that, for any point of $(P,\bm{x})\in \mathcal{E}\times_{\mathbb{A}^{1}}\G_m^n$ such that $\sigma(P,\bm{x})$ is torsion, a fixed fraction of the conjugates of $\bm{x}$ over $k$ lies in $S$. 

The existence of such a compact set will allow us to construct the definable set to which we will apply Pila-Wilkie's Theorem.

The main ingredient in the construction of the compact set $S$ is the fact that the points $P\in C$ such that $s(P)$ is torsion in $\mathcal{E}_{P}$ are of bounded height. We will construct $S$ separately on $C$ and $\G_m^n$. 

First we analyse the $C$ part in the following proposition, which is a slight generalisation of \cite[Lemma 8.2]{MasserZannier12}.

\begin{proposition}
\label{prop:Kdelta1}
	Let $\mathcal{E}/C$ be an elliptic scheme over an affine non singular curve $C\subset \mathbb{A}^{N}$, both defined over a number field $k$. Let $B\subset C$ be a finite set set of points containing the points of bad reduction of $\mathcal{E}/C$. Fix a positive constant $a$ and a number field $K$ containing $k$ and a field of definition of $B$. Then, there is some positive real number $\delta$ such that the compact set 
	\begin{equation}
		\mathscr{K}_{\delta}^{1}:=\{P\in C\subset \mathbb{A}^{N}: \norm{P}\leq 1/\delta,\ \norm{P-\beta}\geq \delta\ \forall \beta\in B\}
	\end{equation}
	has the following property: for all $P\in C\setminus B$ such that $H(P)\leq a$, at least $1/2$ of the $[K(P):K]$ conjugates of $P$ over $K$ are contained in $\mathscr{K}_{\delta}^{1}$.
\end{proposition}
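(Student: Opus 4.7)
The strategy is to bound separately, for a given $P\in C\setminus B$ with $H(P)\leq a$, the number of Galois conjugates $P_1,\ldots,P_d$ of $P$ over $K$ (with $d=[K(P):K]$ and viewed in $\C^N$ via a fixed embedding $K\hookrightarrow\C$) which either have $\|P_j\|>1/\delta$ or have $\|P_j-\beta\|<\delta$ for some $\beta\in B$, and to show that $\delta$ may be chosen small enough (depending only on $a,K,B$) so that this number of ``bad'' conjugates is at most $d/2$ uniformly in $P$. The main tool is the Mahler-measure form of the absolute Weil height: for a nonzero $\gamma\in K(P)$ with conjugates $\gamma_j$ indexed by the $d$ embeddings of $K(P)$ into $\C$ extending the fixed one on $K$, one has
\begin{equation*}
\prod_{j=1}^{d}\max(1,|\gamma_j|)\leq H(\gamma)^{d\cdot[K:\Q]},
\end{equation*}
which follows from the product formula together with the positivity of the finite-place contributions; combined with $H(1/\gamma)=H(\gamma)$, the analogous inequality with $|\gamma_j|^{-1}$ in place of $|\gamma_j|$ also holds.

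To handle large conjugates, for each coordinate function $x_i$ on $\A^N$ the height $H(x_i(P))$ is bounded by a constant depending only on $a$ and the affine embedding $C\subset\A^N$. Applying the inequality to $\gamma=x_i(P)$ gives
\begin{equation*}
\#\{j:|x_i(P_j)|>1/\delta\}\leq \frac{d\,c_3(a,K)}{\log(1/\delta)},
\end{equation*}
and summing over $i=1,\ldots,N$ bounds the count of $P_j$ with $\|P_j\|>1/\delta$. To handle conjugates close to $\beta\in B$, note that $P\neq\beta$ guarantees the existence of an index $i_0$ with $x_{i_0}(P)\neq\beta_{i_0}$; since $\beta\in K^N$, this choice is Galois-invariant, and the conjugates of $\gamma:=x_{i_0}(P)-\beta_{i_0}$ are the nonzero algebraic numbers $x_{i_0}(P_j)-\beta_{i_0}$, whose height is bounded in terms of $a,K,\beta$. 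Since $\|P_j-\beta\|<\delta$ forces $|\gamma_j|<\delta$ and hence $|\gamma_j|^{-1}>1/\delta$, the reciprocal form of the Mahler inequality yields
\begin{equation*}
\#\{j:\|P_j-\beta\|<\delta\}\leq \frac{d\,c_4(a,K,\beta)}{\log(1/\delta)},
\end{equation*}
and summing over $\beta\in B$ bounds the total.

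Adding the two estimates, the number of bad conjugates is at most $d\,c_5(a,K,B)/\log(1/\delta)$, which is at most $d/2$ provided $\delta$ is chosen sufficiently small in terms of the data; this yields the proposition. The main subtlety is the \emph{a priori} dependence of the good index $i_0$ on $P$: the hypothesis that $K$ contains a field of definition of $B$ is precisely what ensures that this choice can be made Galois-equivariantly and that the conjugates of $x_{i_0}(P)-\beta_{i_0}$ are the naturally expected quantities $x_{i_0}(P_j)-\beta_{i_0}$, so that the Mahler inequality applies cleanly.
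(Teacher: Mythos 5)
Your proof is correct and follows essentially the same route as the paper: both arguments bound the number of embeddings at which a coordinate (or its difference from a coordinate of $\beta$) is too large or too small by comparing the archimedean contribution $\log(1/\delta)$ per bad embedding against the globally bounded height of that algebraic number, then choose $\delta$ small enough that the total count of bad conjugates is at most $[K(P):K]/2$. The paper phrases this additively via $h(P-\beta)\geq \frac{1}{d^*}\sum_v\log\max\{1,\lvert 1/(p_j-\beta_j)\rvert_v\}$ rather than via the multiplicative Mahler-measure form, but the content is identical.
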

\begin{proof}
	Let $l=\# B+1$, that is, $l$ is the number of inequalities defining the set $\mathscr{K}_{\delta}^{1}$. Fix $\beta\in B$ and let $\mathscr{I}$ be the set of embeddings $i$ of $K(P)$ into $\mathbb{C}$ sending $K$ to itself such that $\norm{i(P)-\beta}<\delta$. Then we have
	\begin{equation}
		h(P- \beta)\leq a+h(\beta)+\log(2).
	\end{equation}
	On the other hand, write $P=(p_1,\ldots,p_N)$ and $\beta=(\beta_{1},\ldots,\beta_{N})$ and let $1\leq j\leq N$ be such that $h(p_{j}-\beta_{j})=\max_{h=1,\ldots, m}h(p_{h}-\beta_{h})$. Then
	\begin{equation}
	\begin{split}
		h(P-\beta )\geq h(p_{j}-\beta_{j})
		&\geq \frac{1}{d^{*}}\sum_{v}\log\max \left\{ 1,\abs{\frac{1}{p_{j}-b_{j}}}_{v} \right\}\\
		&\geq \frac{1}{d^{*}}\sum_{i\in \mathscr{I}} \log\max \left\{ 1,\abs{\frac{1}{i(p_{j})-b_{j}}}\right\}\\
		&\geq \frac{\abs{\mathscr{I}} }{d^{*}}\log \left( \frac{1}{\delta} \right);
	\end{split}
	\end{equation}
	where $d^{*}=[K(P):\mathbb{Q}]$. From the above inequalities we get
	\begin{equation}
		\abs{\mathscr{I} }\leq \frac{\left(a+h(\beta)+\log(2)\right)d^{*}}{\log(\frac{1}{\delta})}. 
	\end{equation}
	Hence we may choose $\delta$ such that
	\begin{equation}
		\abs{\mathscr{I}}\leq \frac{d^{*}}{2l [K:\mathbb{Q}]}=\frac{[K(P):K]}{2l}. 
	\end{equation}
	A similar proof works for the inequality $\norm{P}\leq 1/\delta$. 
\end{proof}

Now we turn to the $\G_m^n$ part. This is easier since all we need here is to have some compact subset of $\mathbb{G}_{m}^{n}(\mathbb{C})$ whose interior in the euclidean topology contains all its torsion points. Thus, it is sufficient to consider the compact set $\mathscr{K}^2=\{\bm{x}\in \G_m^n | 1/2\leq|x_i|\leq3/2,\ i=1,\ldots,n \}$. 

We can now give the definition of the compact set $S$.

\begin{definition}
\label{def:compatto_S}
	Let $c_1$ be the number given by Proposition \ref{prop:punti_torsione_schema_ellittico} relative to the scheme $\mathcal{E}/C$; moreover let $\delta$ be the number given by Proposition \ref{prop:Kdelta1} relative to $a=c_1$, $B$ the set of points of bad reduction for $\mathcal{E}/C$ and $K$ the number field of definition for $B$. We define 
	\begin{equation}
	S=\mathscr{K}_{\delta}^{1}\times_{\mathbb{A}^{1}}\mathscr{K}^{2} .	
	\end{equation}
\end{definition}

The next proposition is then clear from the results proven in this section and the definition of $S$.

\begin{proposition}
	\label{prop:properties S}
	$S$ is a semialgebraic set of dimension $2n$ and, for all $(P,\bm{\varepsilon}) \in C\times_{\mathbb{A}^{1}}\mathbb{G}_{m}^{n}$ such that $\sigma(P, \bm{\varepsilon} )$ is torsion in $\mathcal{E}_{P}\times \mathbb{G}_{m}^{n}$, at least half of the $[K(P, \bm{\varepsilon}):K]$ conjugates of $(P, \bm{\varepsilon})$ over $K$ lie in $S$.
\end{proposition}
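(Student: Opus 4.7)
The plan is to dispatch the two assertions separately, with the Galois-theoretic count reducing to a bookkeeping exercise that combines Proposition \ref{prop:punti_torsione_schema_ellittico} with Proposition \ref{prop:Kdelta1} applied to the $C$-component of $S$.

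For the dimension claim, I would observe that $S$ is obtained as the intersection of $\mathscr{K}_\delta^1 \times \mathscr{K}^2 \subset C^{\an}\times(\mathbb{G}_m^n)^{\an}$, which has real dimension $2+2n$, with the preimage of the diagonal under $(f,\Sigma)$. Since $f$ is non-constant and $\Sigma$ is a submersion on $(\mathbb{G}_m^n)^{\an}$, the fibre-product $C\times_{\mathbb{A}^1}\mathbb{G}_m^n$ has complex dimension $n$, i.e. real dimension $2n$. Taking $\delta$ small enough guarantees that $\mathscr{K}_\delta^1$ contains interior points of $C^{\an}$, so $S$ meets the smooth locus of the fibre product and therefore has real dimension exactly $2n$. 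Semialgebraicity is immediate from the fact that all the defining conditions (norm inequalities, and the polynomial equation $f(P)=\Sigma(\bm{x})$) are semialgebraic.

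For the Galois count, fix $(P,\bm{\varepsilon})$ with $\sigma(P,\bm{\varepsilon})$ torsion. By definition of $\sigma$ this forces $s(P)$ torsion in $\mathcal{E}_P$ (whence $H(P)\leq c_1$ by Proposition \ref{prop:punti_torsione_schema_ellittico}) and each $\varepsilon_i$ a root of unity. The $[K(P,\bm{\varepsilon}):K]$ conjugates of $(P,\bm{\varepsilon})$ over $K$ are the pairs $(\tau(P),\tau(\bm{\varepsilon}))$; the relation $f(\tau(P))=\Sigma(\tau(\bm{\varepsilon}))$ is automatic because $f,\Sigma$ are defined over $K$, so each conjugate lies again in $C\times_{\mathbb{A}^1}\mathbb{G}_m^n$. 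Galois conjugates of roots of unity are roots of unity, hence have modulus $1$, so $\tau(\bm{\varepsilon})\in\mathscr{K}^2$ unconditionally. Thus $(\tau(P),\tau(\bm{\varepsilon}))\in S$ if and only if $\tau(P)\in\mathscr{K}_\delta^1$. Factoring the map $\{\text{conjugates of }(P,\bm{\varepsilon})\}\to\{\text{conjugates of }P\}$ as a surjection with fibres of size $[K(P,\bm{\varepsilon}):K(P)]$, and invoking Proposition \ref{prop:Kdelta1} with $a=c_1$ and $B,K,\delta$ as in Definition \ref{def:compatto_S} to secure at least $\tfrac{1}{2}[K(P):K]$ conjugates of $P$ in $\mathscr{K}_\delta^1$, multiplication yields at least $\tfrac{1}{2}[K(P,\bm{\varepsilon}):K]$ conjugates of $(P,\bm{\varepsilon})$ in $S$.

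I do not expect any real obstacle here: the substantive work has already been done in Propositions \ref{prop:punti_torsione_schema_ellittico} and \ref{prop:Kdelta1}. The only subtleties are checking that the fibre-product equation is preserved by Galois (immediate from the fields of definition) and that the toric compact $\mathscr{K}^2$ is automatically $\mathrm{Gal}(\bar{\Q}/\Q)$-stable on torsion tuples (immediate since $|\zeta|=1$ for any root of unity $\zeta$).
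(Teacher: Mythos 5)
Your proposal is correct and matches the paper's intent exactly: the paper offers no written proof, stating only that the proposition "is then clear from the results proven in this section and the definition of $S$," and your write-up is precisely the verification being alluded to — Proposition \ref{prop:Kdelta1} handles the $C$-component, the modulus-one observation handles the $\mathbb{G}_m^n$-component, and the fibre-product relation is Galois-stable because $f$ and $\Sigma$ are defined over $K$. The factoring of the conjugate count through the restriction map of embeddings is the right way to pass from "half the conjugates of $P$" to "half the conjugates of $(P,\bm{\varepsilon})$."
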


% subsection algebraic_lower_bounds (end)

\subsection{Construction of the Logarithm} % (fold)
\label{sub:construction_of_the_logarithm}

The aim of this section is to construct the definable set to which we will apply Pila-Wilkie's Theorem \ref{thm:Pila-Wilkie}.

First we observe that, after removing the bad fibres, $\mathcal{E}^{\an}\to C^{an}$ gives an analytic family of compact complex Lie groups over $C^{an}$. Moreover we have that the relative Lie algebra $Lie(\mathcal{E})/C$  gives us an analytic vector bundle $Lie(\mathcal{E})^{\an}\to C^{an}$ of rank $1$ over $C^{an}$. With this bundle we are also given the exponential map $\exp:Lie(\mathcal{E})^{\an}\to \mathcal{E}^{\an}$, which is an analytic and surjective map of varieties over $\C$. The kernel $\Pi_{\mathcal{E}}$ of $\exp$ is a locally constant sheaf of rank $2$ $\mathbb{Z}$-modules over $C^{an}$, which we call the \emph{local system of periods} of $\mathcal{E}/C$. 

Now, let $W$ be an open contractible subset of $C^{an}$, so that both the local system of periods $\Pi_{\mathcal{E}}$ and the line bundle $Lie(\mathcal{E})^{an}/C$ are trivial over $W$\footnote{
	We note that $C^{an}\subset \mathbb{C}^{N}$ is a Stein space, this implies that all holomorphic line bundles over it are trivial. Hence the bundle $Lie(\mathcal{E})^{an}\to C^{an}$ is already trivial and we need to choose and open contractible set to be sure that the local system of periods is also trivial.}.
Choose a section $W\to W\times \mathbb{C}\cong Lie(\mathcal{E})^{\an}|_{W} $ of $\exp$ over $W$ and denote by ${\log}_{W}$ the composition  $W\to W\times \mathbb{C}\to \mathbb{C}$. Then, we define ${\log}_{E,U}$ as the composition ${\log}_{U}\circ s$. 

Moreover, for any such open subset $W$ of $C^{an}$, we have that the sections of $\Pi_{\mathcal{E}}$ over $W$ form a free $\Z$-module of rank $2$. We fix a basis for this $\Z$-module and denote it by ${\omega}_{1,W},{\omega}_{2,W}$. 

The \emph{Betti presentation of the logarithm} ${\log}_{E,W}$ is defined as the pair of real analytic functions $({b}_{1,W},{b}_{2,W}):W\to \R^2$ such that ${\log}_{E,W}={b}_{1,W} {\omega}_{1,W} + {b}_{2,W} {\omega}_{2,W}$.

Further, let $W'$ be an open contractible subset of ${\mathbb{G}_{m}^{n}}^{an}={\mathbb{C}^{*}}^{n}$; define $p_j: \G_{m}^{n}\to \mathbb{G}_{m}$ to be the projections to the $j$-th  component, and let $\log_{j,W'}$ be a determination of the logarithm defined on the projection $p_{j}(W')$. Given this data, we set $a_{j,W'}=(\log_{j,W'}\circ p_{j})/2\pi i$.

We observe that, given a point $(P, \bm{x} )\in C\times_{\mathbb{A}^{1}} \mathbb{G}_{m}^{n}$, $\sigma(P, \bm{x} )$ is torsion if and only if the point $({b}_{1,W}(P), {b}_{2,W}(P), a_{1}(x), \ldots, a_{n}(x)) \in \mathbb{R}^{2} \times \mathbb{C}^{n}$ is rational.

See Figure \ref{fig:diagramma_mappe} for a diagram containing all the maps defined up to this point.

\begin{figure}[htb]
	\hspace{-20pt}
	\begin{center}
		%!TEX root = ../Articolo.tex

\begin{tikzpicture}[scale=2.3]
			%\draw[help lines] (-3,0) grid (3,3);
   			\node (W)		at		(0,0)		{$W$};
   			\node (Curve)	at 		(0.25,0) 	{$\subset C$};
   			\node (E)		at		(-1,1)		{$\mathcal{E}^{\an}\times_{\A^1}W$};
   			\node (Lie)		at		(-1,2)		{$\C\times W$};
   			\node (PE)		at		(-2,2)		{$\Pi_{\mathcal{E}}\times_{\A^1}W$};
   			\node (CE)		at		(-1,3)		{$\C$};
   			\node (R2)		at		(.5,3)		{$\R^2$};
			\node (W')		at		(1,0)		{$W'$};
			\node (GmN)		at 		(1.32,0)	{$\subset \mathbb{G}_{m}^{n}$};
			\node (Gm1)		at		(1.5,1)		{$p_{1}(W')$};
			\node (pti)		at		(2,1)		{$\cdots$};
			\node (Gmn)		at		(2.5,1)		{$p_{n}(W')$};
			\node (C1)		at		(1.5,2)		{$\C$};
			\node (Cn)		at		(2.5,2)		{$\C$};
			\node (A1)		at 		(.5,-.5)		{$\mathbb{A}^{1}$};
			\draw[-stealth] (W)		edge[bend left=20]	node[below]				{{$s$}}														(E)
							(W)		edge[out=170,in=180,absolute,looseness=1.85]	node[left,near start]	{{${\omega}_1,{\omega}_2$}}	(CE)		  
							(W)		edge[out=90, in=330,absolute]	node[right, near end]				{{${\log}_{E,W}$}}						(CE)
							(W)		edge[bend right=10]	node[right, near end]		{{$({b}_{1,W},{b}_{2,W})$}}		  (R2)
							(E)		edge				node[above]				{{$\pi$}}									(W)
							(E)		edge[bend right=30]	node[right]				{{${\log}_{W}$}}						(CE)
							(Lie)	edge				node[left]				{{$\exp$}}									(E)
							(Lie)	edge																								(CE)
							(W')		edge				node[near start, below]				{{$\Sigma$}}									(A1)
							(W)		edge				node[near start, below]				{{$f$}}									(A1)
							(W')		edge				node[right]				{{$p_1$}}									(Gm1)
							(W')		edge				node[right]				{{$p_n$}}									(Gmn)
							(Gm1)	edge				node[right]				{{$\frac{\log_{1,W'}}{2\pi i}$}}									(C1)
							(Gmn)	edge				node[right]		{{$\frac{\log_{n,W'}}{2\pi i}$}}									(Cn);
			\draw[arrows = {Hooks[right]-stealth}]
							(PE) -- (Lie);
 		\end{tikzpicture}
	\end{center}
	\caption{\label{fig:diagramma_mappe}These are the maps used to define the logarithm of the semialgebraic set $S$.}
\end{figure}
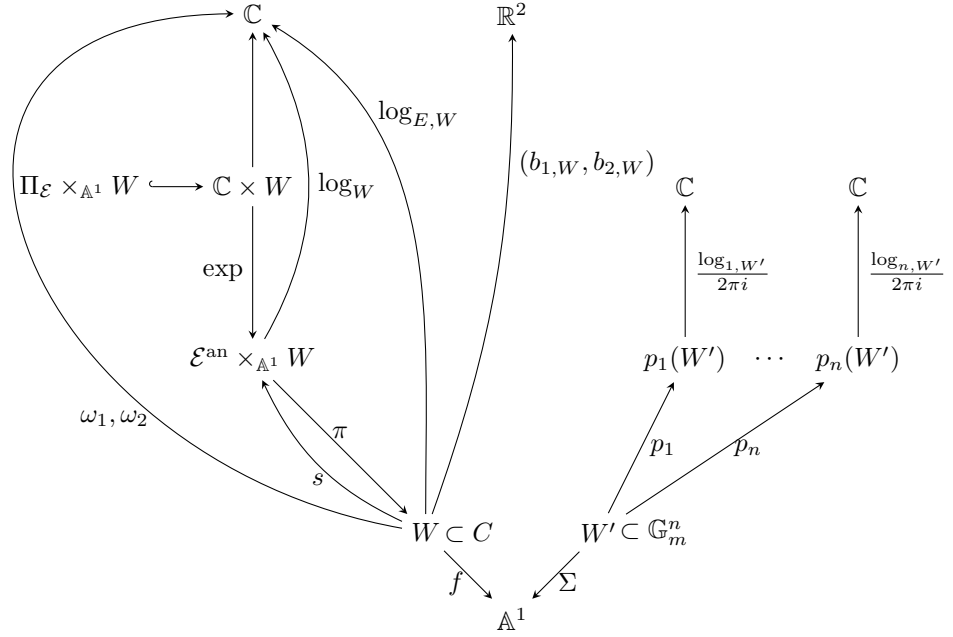

\begin{definition}
\label{defi:theta}
	For any open subset $V$ of $(C\times_{\mathbb{A}^{1}}\G_m^n)^{an}$ such that its projections $\tilde{V}$ and $\tilde{V}'$ to $C^{an}$ and ${\mathbb{G}_{m}^{n}}^{an}$ respectively are open contractible subsets, we define the map $\theta_V:V\to \mathbb{R}^{2}\times\C^n$ as 
	\begin{equation}
		\theta_V=(b_{1,\tilde{V}},b_{2,\tilde{V}},a_{1,\tilde{V}'},\ldots,a_{n,\tilde{V}'}).
	\end{equation}
\end{definition}

\begin{remark}
\label{oss:cover_di_S}
	Since $S$ is a semialgebraic compact set, we can find a finite number of open subsets $C_{1},\ldots, C_{r}$ of ${(C\times_{\mathbb{A}^{1}}\mathbb{G}_{m}^{n})}^{an}$ definable in $\mathbb{R}_{an,exp}$ such that the projections of each $C_{j}$ to $C^{an}$ and $\mathbb{G}_{m}^{n}$ are open and contractible and the union $\bigcup_{j=1}^{r}C_{j}$ contains $S$.
\end{remark}

\begin{definition}
\label{defi:Logaritmo_S}
	With notation as in the above remark, we define the \emph{logarithm} of $S$ to be the set 
	\begin{equation}
		\mathscr{S}=\bigcup_{i=1}^n \theta_{C_i}(S\cap C_i).
	\end{equation}
\end{definition}

In the following proposition we collect some properties of the set $\mathscr{S}$ which follow directly from the definition.

\begin{proposition}
	\label{prop:properties logarithm}
	$\mathscr{S}$ is definable in the o-minimal structure $\mathbb{R}_{an,exp}$. For all $(P, \bm{x} )\in S$, $\sigma(P, \bm{x})$ is torsion if and only if the corresponding point in $\mathscr{S}$ is rational.
\end{proposition}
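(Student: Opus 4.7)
The plan is to verify the two assertions separately, working with the decomposition $\mathscr{S}=\bigcup_{i=1}^{r}\theta_{C_{i}}(S\cap C_{i})$ into finitely many pieces.

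For definability, it suffices to show that each $\theta_{C_{i}}(S\cap C_{i})$ is definable in $\mathbb{R}_{an,exp}$. By Proposition \ref{prop:properties S}, the set $S$ is semialgebraic, hence definable; by Remark \ref{oss:cover_di_S}, each $C_{i}$ is definable. Therefore $S\cap C_{i}$ is definable, and it remains to exhibit $\theta_{C_{i}}$ as a definable map. After possibly shrinking each $C_{i}$ while keeping a finite covering of the compact set $S$, the projections $\tilde{C}_{i}\subset C^{\an}$ and $\tilde{C}_{i}'\subset (\mathbb{G}_{m}^{n})^{\an}$ are relatively compact in larger contractible open sets on which $\log_{E,\tilde{C}_{i}}$, the periods $\omega_{1,\tilde{C}_{i}},\omega_{2,\tilde{C}_{i}}$, and the branches $\log_{j,\tilde{C}_{i}'}$ are analytic. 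Their restrictions to the relevant compact pieces become restricted analytic after the usual affine rescaling, and therefore belong to $\mathbb{R}_{an,exp}$; the Betti components $b_{1,\tilde{C}_{i}},b_{2,\tilde{C}_{i}}$ are real-analytic combinations of these ingredients and are likewise definable. The image of a definable set under a definable map is definable via the projection axiom, so $\theta_{C_{i}}(S\cap C_{i})$ is definable.

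For the torsion characterisation, decompose $\sigma(P,\bm{x})=(s(P),\bm{x})\in \mathcal{E}_{P}\times\mathbb{G}_{m}^{n}$: this is torsion precisely when $s(P)$ is torsion in $\mathcal{E}_{P}$ and each $x_{j}$ is a root of unity. By definition of the Betti coordinates, $s(P)$ is $N$-torsion iff $N\log_{E,\tilde{C}_{i}}(P)\in\mathbb{Z}\omega_{1,\tilde{C}_{i}}+\mathbb{Z}\omega_{2,\tilde{C}_{i}}$, equivalently iff $(b_{1,\tilde{C}_{i}}(P),b_{2,\tilde{C}_{i}}(P))\in\mathbb{Q}^{2}$. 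Similarly, $x_{j}$ is a root of unity iff $x_{j}=e^{2\pi i q}$ for some $q\in\mathbb{Q}$, iff $a_{j,\tilde{C}_{i}'}(\bm{x})\in\mathbb{Q}$ (which in particular forces the imaginary part to vanish). Combining the two, $\sigma(P,\bm{x})$ is torsion iff every real coordinate of $\theta_{C_{i}}(P,\bm{x})\in\mathbb{R}^{2}\times\mathbb{C}^{n}\cong\mathbb{R}^{2+2n}$ is rational. The condition is independent of which $C_{i}$ containing $(P,\bm{x})$ we choose, since passing to a different contractible piece modifies the Betti coordinates and the $a_{j}$'s by integer amounts that preserve rationality.

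I expect the main obstacle to be the definability portion: one must reconcile the inherently multivalued elliptic and complex logarithms with the single-valued definable functions required by $\mathbb{R}_{an,exp}$. Compactness of $S$ together with the flexibility in choosing the covering from Remark \ref{oss:cover_di_S} are what allow us to pick consistent branches on each piece and realise $\theta_{C_{i}}$ as a finite list of restricted analytic functions (plus additive integer ambiguities that are irrelevant to the rationality test); once this is done, the remainder of the proposition is a direct translation between algebraic torsion and rationality in Betti/logarithmic coordinates.
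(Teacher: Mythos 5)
Your argument is correct and fills in exactly the details the paper leaves implicit (the paper offers no proof, stating that these properties ``follow directly from the definition,'' and the torsion--rationality observation is already recorded just before Definition \ref{defi:theta} is used). The two points you flag as delicate --- shrinking the $C_i$ to make the branches of $\log_{E}$, the periods and the complex logarithms restricted analytic, and the invariance of rationality under change of branch and of period basis --- are precisely the right ones, so no further comment is needed.
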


\begin{remark}
	\label{rem:abuse of notation}
	In the above proposition we say that there is a point in $\mathscr{S}$ corresponding to each point $(P, \bm{x} )\in S$. This is not precise, there might be more than one point in $\mathscr{S}$ whose exponential is the
	 point $(P, \bm{x})$, however the set of such points is finite.
\end{remark}

% subsubsection construction_of_the_logarithm (end)

\subsection{Semialgebraic Subsets of the Logarithm} % (fold)
\label{sub:semialgebraic_subsets_of_the_logarithm}

As stated before, we will apply Pila-Wilkie's Theorem \ref{thm:Pila-Wilkie} to the set $\mathscr{S}$. To do this we will need some information about the semialgebraic subsets of $\mathscr{S}$. The aim of this section is to prove the following proposition.

\begin{proposition}
\label{prop:sottoinsiemi_semialgebrici_logaritmo}
	Let $\mathscr{T}$ be a connected semialgebraic subset of $\mathscr{S}\subset \mathbb{R}^{2}\times\C^n$; then the projection of $\mathscr{T}$ to $\R^2$ is constant.
\end{proposition}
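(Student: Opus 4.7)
The plan is a proof by contradiction. Assume $\mathscr{T}\subset\mathscr{S}$ is a connected semialgebraic subset whose projection to $\R^2$ is not a single point; then $\pi_{\R^2}(\mathscr{T})$ is a connected semialgebraic subset of $\R^2$ of positive dimension. Since $\mathscr{S}$ is covered by the finitely many definable pieces $\theta_{C_i}(S\cap C_i)$ of Definition~\ref{defi:Logaritmo_S}, the projection of at least one of the definable pieces $\mathscr{T}\cap\theta_V(V\cap S)$ (for $V$ some $C_i$) also has positive dimension in $\R^2$. On such a chart $\theta_V$ is injective, the branches of the elliptic and multiplicative logarithms being fixed, so this piece lifts through the local inverse of $\theta_V$ to a real-analytic subset $T\subset V\cap S\subset C\times_{\A^1}\G_m^n$.

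The first two coordinates of $\theta_V$, namely the Betti coordinates $(b_{1,\tilde{V}}(P),b_{2,\tilde{V}}(P))$ of $\log s(P)$, depend only on $P\in C^{an}$ and not on $\bm{x}$. Hence the projection of $\mathscr{T}$ to $\R^2$ coincides with the image of $\pi_C(T)\subset C^{an}$ under the real-analytic Betti map, and the hypothesis of non-constancy forces $\pi_C(T)$ to be a positive-dimensional real-analytic subset of the one-complex-dimensional manifold $C^{an}$ on which the Betti map is not constant.

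To extract a contradiction one must confront two facts: $\mathscr{T}$, being semialgebraic in $\R^{2+2n}$, is defined by polynomial equations and inequalities in $(b_1,b_2,\Re a_j,\Im a_j)$; on the other hand, every point of $\mathscr{T}$ satisfies the transcendental fibre-product relation $f(P)=\sum_j e^{2\pi i a_j}$, where $P$ is recovered from $(b_1,b_2)$ via the (transcendental) inverse of the Betti map. A functional-transcendence argument of Ax--Lindemann--Weierstrass type for the combined elliptic logarithm attached to $s$ and the $n$ multiplicative logarithms, crucially exploiting that $s$ is not constantly torsion, should show that these two constraints are jointly satisfiable on a positive-dimensional connected semialgebraic set only if the Betti projection reduces to a single point. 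This transcendence step is the main obstacle: as highlighted in the introduction, the appearance of positive-dimensional semialgebraic subsets of $\mathscr{S}$ --- arising for example by fixing $P$ and letting $\bm{x}$ vary inside the algebraic fibre $\{\bm{x}\in\G_m^n:\sum x_j=f(P)\}$ of the fibre product --- is the chief novelty of the present setting compared to \cite{BertrandMasserPillayZannier16}, and the proposition pins down the exact constraint that such subsets must satisfy.
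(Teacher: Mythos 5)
Your setup is right and matches the paper's reduction: assume the $\R^2$-projection of $\mathscr{T}$ is non-constant, localize to a single chart $C_i$ where $\theta$ is injective, lift to a positive-dimensional real-analytic set $\Gamma\subset S$, observe that the Betti coordinates depend only on the $C$-component, and conclude that $P$ (hence, via $f$, the value $\Sigma(\bm x)=f(P)$) is non-constant on $\Gamma$. You also correctly diagnose the genuine difficulty of this setting: $\mathscr{S}$ really does contain positive-dimensional semialgebraic subsets (fix $P$, vary $\bm x$ in the algebraic fibre of $\Sigma$), so any functional-transcendence input must be sharp enough to isolate these.

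The gap is that you stop exactly where the proof begins. You write that ``a functional-transcendence argument of Ax--Lindemann--Weierstrass type \dots should show'' the contradiction and flag it as ``the main obstacle,'' but you do not supply the argument, and the form of result you gesture at is not the one that works. The ALW statement actually quoted in the paper (Theorem~\ref{thm:Ax-Lindemann-Weierstrass}) is purely about $\exp:\C^n\to\G_m^n$; it gives no information about the elliptic logarithm $\log_E$ and is used only later, in Proposition~\ref{prop:sottoinsiemi_semialgebrici_V_zeta}, to handle the subsets you identified as having constant $\R^2$-projection. What Proposition~\ref{prop:sottoinsiemi_semialgebrici_logaritmo} actually needs is a mixed transcendence statement tying together the $n$ multiplicative logarithms, the elliptic logarithm of $s$, \emph{and} the periods $\omega_1,\omega_2$.

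Concretely, the paper proceeds as follows. From the fact that $\theta(\Gamma)=\mathscr{C}$ is a semialgebraic curve one deduces
\[
\trdeg_{\C(\omega_1|_\Gamma,\omega_2|_\Gamma)}\C\bigl(\omega_1|_\Gamma,\omega_2|_\Gamma,\log\circ p_1|_\Gamma,\dots,\log\circ p_n|_\Gamma,\log_E|_\Gamma\bigr)\le 1,
\]
so there are $n$ independent algebraic relations $\gamma_i=P_i(\log x_1,\dots,\log x_n,\log_E)=0$ with coefficients in $\C(\omega_1,\omega_2)$ holding on $\Gamma$. One then proves (Lemma~\ref{prop:dimensione_intersezione_gamma_i}) that, because $\Sigma$ is non-constant on $\Gamma$, the common zero set $\Gamma_1\cap\dots\cap\Gamma_n$ has complex dimension $0$; since $\Gamma$ is a positive-dimensional connected set contained in it, this is the contradiction. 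The dimension bound itself rests on Lemma~\ref{lemm:grado_trascendenza_logaritmi}, whose proof is by differential Galois theory (the $SL_2(\C)$ / $\G_a^2$ structure of the Picard--Vessiot extensions attached to $\omega_1,\omega_2,\eta_1,\eta_2$ and $\log_E,\zeta(\log_E)$, following \cite{Bertrand89}) in the non-isotrivial case, and by the Brownawell--Kubota theorem \cite{BrownawellKubota77} in the isotrivial case. None of this is a routine ALW application, and without it your proposal does not establish the proposition. Also note that the hypothesis actually driving the transcendence lemma is that $x_1+\dots+x_n$ is non-constant on $\Gamma$ (a consequence of your non-constant Betti projection via $f$), rather than directly the non-torsion of $s$, although the latter is needed for the Picard--Vessiot group computation to come out as $\G_a^2$ rather than something smaller.
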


Let $\mathscr{C}$ be a semialgebraic curve in $\mathscr{S}$; we can reduce to the case in which $\mathscr{C}$ is contained in one of the $\theta_i(C_i)$. In what follows we fix $C=C_i$ and drop any subscript $C$ we used in the definition of the set $\mathscr{S}$. Moreover, up to reducing to a smaller open subset $C$, we may assume that a single determination $\log$ of the complex logarithm is defined on all projections $p_j(C)$, $j=1,\ldots, n$ and such determination is used to define the functions $a_{1},\ldots,a_{n}$.

Now let $\Gamma$ be the inverse image of $\mathscr{C}$ by $\theta$; then $\Gamma$ is a real analytic curve in $S$. Since the image of $\Gamma$ by $\theta=(a_1,\ldots,a_n,b_1,b_2)$ is the semialgebraic curve $\mathscr{C}$, we have that the transcendence degree of the field $\C(a_1|_{\Gamma},\ldots,a_n|_{\Gamma},b_1|_{\Gamma},b_2|_{\Gamma})$ over $\C$ is at most $1$. Thus also the transcendence degree of $\C(\omega_1|_{\Gamma},\omega_2|_{\Gamma}, a_1|_{\Gamma},\ldots,a_n|_{\Gamma},b_1|_{\Gamma},b_2|_{\Gamma})$ over $\C(\omega_1|_{\Gamma},\omega_2|_{\Gamma})$ is at most $1$. Moreover, as in the previous section, let $p_i:\mathbb{G}_{m}^{n}\to \mathbb{G}_{m}$ be the projection to the $i$-th factor, then, by definition, $\log \circ p_{i}=2\pi i a_i$ and $\log_{E}=(\omega_1b_1+\omega_2b_2)$, thus 
\begin{equation}
	\log\circ p_1,\ldots,\log\circ p_n,\log_{E}\in \C(\omega_1,\omega_2,a_1,\ldots,a_n,b_1,b_2).
\end{equation}
It follows that 
\begin{equation}
	\trdeg_{\C(\omega_1|_{\Gamma},\omega_2|_{\Gamma})} \C(\omega_1|_{\Gamma},\omega_2|_{\Gamma},\log\circ p_1|_{\Gamma},\ldots,\log\circ p_n|_{\Gamma},\log_{E})\leq 1.
\end{equation}
Hence we can find $n$ polynomials $P_1,\ldots,P_n$ in $\C(\omega_1,\omega_2)[X_1,\ldots,X_{n+1}]$ such that each of the $P_i$ does not belong to the radical of the ideal generated by the others and moreover the $n$ functions $\gamma_i(P,\bm{x})= P_i(\log(x_1),\ldots,\log(x_n),\log_{E}(P))$, $i=1,\ldots,n$, are holomorphic in a neighbourhood of $\Gamma$ and vanish on it. Finally we let $\Gamma_i$ be the vanishing set of $\gamma_i$. Proposition \ref{prop:sottoinsiemi_semialgebrici_logaritmo} will follow from the following Lemma.

\begin{lemma}
\label{prop:dimensione_intersezione_gamma_i}
	Assume that $\Sigma$ is not constant on $\Gamma$. Then, for any $i=1,\ldots,n$, the intersection $\Gamma_1\bigcap\ldots\bigcap \Gamma_i$ is a complex analytic variety of dimension at most $n-i$.
\end{lemma}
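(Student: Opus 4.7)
I would proceed by induction on $i$. The base case $i=0$ is trivial, as a neighbourhood of $\Gamma$ in $(C\times_{\mathbb{A}^1}\mathbb{G}_m^n)^{\an}$ has complex dimension $n$. For the inductive step, assume $\dim(\Gamma_1\cap\cdots\cap\Gamma_i)\leq n-i$ and suppose, towards a contradiction, that some irreducible component $V$ of $\Gamma_1\cap\cdots\cap\Gamma_{i+1}$ has complex dimension at least $n-i$. Since $V\subseteq \Gamma_1\cap\cdots\cap\Gamma_i$, the inductive hypothesis forces $\dim V = n-i$ and $V$ to be an irreducible component of $\Gamma_1\cap\cdots\cap\Gamma_i$. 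Observe, moreover, that the hypothesis $P_{i+1}\notin\operatorname{rad}(P_1,\ldots,\widehat{P_{i+1}},\ldots,P_n)$ implies the sharper statement $P_{i+1}\notin\operatorname{rad}(P_1,\ldots,P_i)$, since a smaller ideal has a smaller radical. The goal becomes to deduce a contradiction with this sharper statement from the existence of $V$.

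My approach is to consider the auxiliary holomorphic map
\[
\Phi\colon (P,\bm{x})\longmapsto \bigl(\omega_1(P),\omega_2(P),\log x_1,\ldots,\log x_n,\log_E(P)\bigr),
\]
defined on a small neighbourhood of a smooth point of $V$. Since each $\log x_j$ determines $x_j$ and $\log_E(P)$ determines $P$ via the section $s$, $\Phi$ is locally injective and, at a generic point, an immersion onto an $n$-dimensional analytic set $Z\subseteq \mathbb{C}^{n+3}$. Writing $\tilde P_j$ for the polynomial in $\mathbb{C}[W_1,W_2,X_1,\ldots,X_{n+1}]$ obtained from $P_j$ after clearing denominators in $\omega_1,\omega_2$, we have $\Gamma_j = \Phi^{-1}(\{\tilde P_j=0\}\cap Z)$, and $\Phi(V)$ exhibits an excess intersection of dimension $n-i$ inside $Z\cap \{\tilde P_1=\cdots=\tilde P_{i+1}=0\}$. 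Applying the analytic Nullstellensatz in the local ring of holomorphic germs at a generic smooth point of $V$ then yields $\gamma_{i+1}^N\in (\gamma_1,\ldots,\gamma_i)$ for some $N\geq 1$.

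The closing step, which I expect to be the main obstacle, is to promote this analytic radical membership to the algebraic containment $P_{i+1}^N\in (P_1,\ldots,P_i)$ in $\mathbb{C}(\omega_1,\omega_2)[X_1,\ldots,X_{n+1}]$, contradicting the sharpened hypothesis. Here the assumption that $\Sigma$ is not constant on $\Gamma$ enters crucially: it forces $P$ to vary genuinely on $V\supseteq\Gamma$, so that the logarithms $\log x_1,\ldots,\log x_n,\log_E(P)$ sweep out an open portion of their joint image on $V$, unobstructed by any algebraic relation beyond those encoded in the $P_j$ themselves. The delicate part is to argue that no hidden polynomial dependence among the components of $\Phi$ can mask the required polynomial radical membership; this demands a functional transcendence input of Ax--Schanuel flavour involving both the ordinary and the elliptic exponential. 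Once the promotion is secured, the contradiction with $P_{i+1}\notin \operatorname{rad}(P_1,\ldots,P_i)$ closes the induction, while the dimension-counting framework above remains routine.
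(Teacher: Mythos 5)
There is a genuine gap, and it sits exactly where you say you ``expect the main obstacle'' to be: the promotion from the analytic germ relation $\gamma_{i+1}^N\in(\gamma_1,\ldots,\gamma_i)$ to the polynomial containment $P_{i+1}^N\in(P_1,\ldots,P_i)$ is not a closing technicality but the entire content of the lemma, and you have not supplied it. The image $Z$ of your map $\Phi$ is a transcendental analytic set; membership of $\gamma_{i+1}$ in an ideal of the local ring of germs at a point of $Z$ says nothing about the ring $\C(\omega_1,\omega_2)[X_1,\ldots,X_{n+1}]$ unless one first shows that the functions $\log_E|_V,\log x_1|_V,\ldots$ satisfy no algebraic relations over $\C(\omega_1,\omega_2)$ beyond the expected ones --- which is precisely what the lemma must establish. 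The paper supplies this input as a separate, proved statement (Lemma \ref{lemm:grado_trascendenza_logaritmi}): on a ball $V\subset S$ of dimension $n-i$ on which $x_1,\ldots,x_{n-i}$ are algebraically independent and $\Sigma$ is non-constant, the field generated by $\omega_1,\omega_2,\log_E,\log\circ p_1,\ldots,\log\circ p_{n-i}$ has transcendence degree at least $n-i+1$ over $\C(\omega_1,\omega_2)$. Its proof is not a formal manipulation: in the non-isotrivial case it runs through differential Galois theory (the $\G_a^2$ of elliptic logarithms carries the standard $SL_2(\C)$-action on periods while each $\G_a$ of ordinary logarithms carries the trivial one, forcing the total Galois group to be the full direct sum), and in the constant case through the Brownawell--Kubota theorem (Proposition \ref{prop:teo2_Kubota}). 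Invoking ``a functional transcendence input of Ax--Schanuel flavour'' as a black box does not discharge this obligation, and this is also the only place where the hypothesis that $\Sigma$ is non-constant actually does any work, so your proof never genuinely uses it.

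Separately, even once the transcendence input is available, your route asks for more than it delivers and more than is needed. A lower bound on transcendence degree does not by itself yield ideal membership in the polynomial ring: to get $P_{i+1}^N\in(P_1,\ldots,P_i)$ you would further need the Zariski closure of $\Phi(V)$ over $\C(\omega_1,\omega_2)$ to be an entire irreducible component of $V(P_1,\ldots,P_i)$, which is an extra argument you do not make. The paper sidesteps all of this with a bare dimension count: a ball $V$ of dimension $n-i$ inside $\Gamma_1\cap\cdots\cap\Gamma_{i+1}$ carries $i+1$ independent relations among the $n+1$ functions $\log_E,\log x_1,\ldots,\log x_n$ over $\C(\omega_1,\omega_2)$, forcing their transcendence degree down to at most $n-i$, against the lower bound $n-i+1$ from Lemma \ref{lemm:grado_trascendenza_logaritmi}. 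If you want to keep your framework, replace the Nullstellensatz-plus-promotion step by this transcendence-degree comparison and prove (or correctly cite) the transcendence lemma; as written, the proposal establishes only the routine scaffolding around the theorem's actual difficulty.
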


The fundamental part of the proof of this lemma is given by the following result.

\begin{lemma}
\label{lemm:grado_trascendenza_logaritmi}
	Let $i\in\{1,\ldots,n\}$ and $V\subseteq S$ homeomorphic to an open ball in $\C^i$, moreover let us suppose that the coordinates $x_1,\ldots,x_i$ are algebraically independent over $V$ and that the sum $x_1+\ldots+x_n$ is not constant on $V$. Then the field
	\begin{equation*}
		\C(\omega_1|_{V},\omega_2|_{V},\log_E|_{V},\log\circ p_1|_{V},\ldots,\log\circ p_i|_{V})
	\end{equation*}
	has trascendence degree (at least) $i+1$ over
	\begin{equation*}
		{\C(\omega_1|_{V},\omega_2|_{V})}. 
	\end{equation*}
\end{lemma}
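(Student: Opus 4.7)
The plan is to deduce the lemma from the Ax--Schanuel theorem applied to the semiabelian scheme $\mathcal{E}\times\mathbb{G}_m^i$ (using only the first $i$ multiplicative coordinates), interpreted relatively over the base $C$ so that the periods $\omega_1,\omega_2$ end up in the coefficient field. Concretely, I would consider the analytic map
\begin{equation*}
\gamma : V \longrightarrow \mathcal{E}\times\mathbb{G}_m^i,\qquad (P,\bm{x})\longmapsto (s(P),x_1,\ldots,x_i),
\end{equation*}
whose composition with the relative exponential of $\mathcal{E}\times\mathbb{G}_m^i$ is lifted, in a neighbourhood of $V$, by the tuple $(\log_E, \log\circ p_1,\ldots,\log\circ p_i)$. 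The Ax--Schanuel theorem for semiabelian varieties would then yield
\begin{equation*}
\trdeg_{\C(\omega_1,\omega_2)}\C(\omega_1,\omega_2,\log_E,\log\circ p_1,\ldots,\log\circ p_i)\geq \dim \langle\gamma(V)\rangle,
\end{equation*}
where $\langle \gamma(V)\rangle$ denotes the smallest coset of an algebraic subgroup of $\mathcal{E}\times\mathbb{G}_m^i$ containing the image of $\gamma$.

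The heart of the proof is then to show that $\langle\gamma(V)\rangle = \mathcal{E}\times\mathbb{G}_m^i$, which has dimension $i+1$ and would give the stated bound. Because an elliptic curve and $\mathbb{G}_m$ admit no non-constant algebraic group homomorphisms between them, any proper algebraic subgroup of $\mathcal{E}\times\mathbb{G}_m^i$ splits as a direct product $H_{\mathcal{E}}\times H_{\mathrm{m}}$ of proper subgroups of the factors. It therefore suffices to rule out cosets of the two factor types separately. A coset of a proper subgroup of $\mathbb{G}_m^i$ is cut out by a non-trivial equation $x_1^{a_1}\cdots x_i^{a_i} = c$; having $\gamma(V)$ lie in such a coset would impose an algebraic relation on $x_1,\ldots,x_i$, contradicting their algebraic independence on $V$. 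On the other side, a proper (relative) subgroup of $\mathcal{E}/C$ is a torsion subscheme, so $\gamma(V)$ lying in a coset of it would mean that $s(P)$, restricted to the projection of $V$ to $C$, agrees with a fixed torsion section; but here the hypothesis that $\Sigma=x_1+\cdots+x_n$ is non-constant on $V$ (which via the fibre product identity $\Sigma = f$ forces $P$ to vary on $V$), combined with the hypothesis that $s$ is not constantly torsion, provides the contradiction.

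The main obstacle I anticipate is the careful set-up of the relative Ax--Schanuel theorem when $\mathcal{E}/C$ is non-isotrivial: in that case $\omega_1,\omega_2$ are honest transcendental functions of $P$, the relative Lie algebra is a non-trivial line bundle, and one must identify precisely against which coefficient field the transcendence degree is being measured. Function-field versions of Ax's theorem for semiabelian schemes over a base (as in Kirby's work, or the adaptation by Bertrand and Pillay) provide what is needed, but the bookkeeping of periods and the interaction between the elliptic and multiplicative logarithms deserves real care.
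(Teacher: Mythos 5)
Your plan outsources the entire content of the lemma to a ``relative Ax--Schanuel theorem for the semiabelian scheme $\mathcal{E}\times\G_m^i$'', and that is precisely where the gap lies: no such off-the-shelf statement is available in the form you need. Kirby's results concern \emph{constant} semiabelian varieties; Bertrand--Pillay prove a Lindemann--Weierstrass statement (transcendence of exponentials of algebraic points), whereas here you need the dual, logarithmic direction for a \emph{non-isotrivial} elliptic scheme, measured over the field generated by the periods. The paper's proof is, in effect, a proof of exactly the instance you are citing: it uses Bertrand's 1989 theorem to get that the differential Galois group of $F(\log_E,\zeta(\log_E))/F$ is $\G_a^2$, observes that each $F(\log\circ p_j)/F$ has group $\G_a$, and then — this is the step your proposal has no substitute for — shows that the elliptic and multiplicative logarithms cannot satisfy a joint relation because $\G_a^2$ carries the standard $SL_2(\C)$-action while each $\G_a$ carries the trivial one, so the only submodule of $\G_a^2\oplus\G_a^i$ surjecting onto both summands is the whole thing. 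Establishing the two transcendence statements separately and then proving they do not interact is the heart of the lemma; your appeal to a single packaged Ax--Schanuel hides it rather than proves it. (The isotrivial case, where the paper instead invokes Brownawell--Kubota, would indeed fit your coset picture, but that is the easy half.)

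There is a second, more local problem in your non-degeneracy analysis of the elliptic factor. You argue that if $\gamma(V)$ lay in a coset of a zero-dimensional subgroup on the $\mathcal{E}$-side, then $s$ would agree with a torsion section, contradicting the hypotheses. But the translating point of the minimal coset is arbitrary, not torsion: what you must actually exclude is that $\log_E|_V$ is algebraic over $\C(\omega_1|_V,\omega_2|_V)$, and ``$P$ varies on $V$ and $s$ is not constantly torsion'' does not deliver this. Indeed, if $\mathcal{E}=E_0\times C$ is constant and $s$ is a constant non-torsion section, then $\log_E|_V$ is a constant and the transcendence degree drops to $i$ even though all your hypotheses hold — so your Case 2 contradiction does not materialise. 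The input that rules this out in the non-isotrivial case is again Bertrand's theorem (for a non-isotrivial scheme a section whose logarithm is degenerate must be torsion), not the bare hypotheses you invoke. You would need to either restrict to the non-isotrivial case and cite that theorem explicitly, or treat the isotrivial case separately as the paper does.
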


The case of a constant elliptic scheme is settled by \cite[Theorem 2]{BrownawellKubota77}; we recall below a special case of this theorem which is sufficient for our purposes.

\begin{proposition}[Special case of Theorem 2 \cite{BrownawellKubota77}]
\label{prop:teo2_Kubota}
	Let $\mathscr{P}$ and $\zeta$ be Weierstrass functions associated to the pair of periods $\omega_1$, $\omega_2$. Let $y_0,\ldots,y_i\in \C\left[\left[t_1,\ldots,t_n\right]\right]$ without constant term such that $y_0\neq 0$ and $y_1,\ldots,y_i$ are linearly independent over $\Q$. Let $r$ be the rank of the jacobian matrix $(\de y_i/\de t_j)$. Then the transcendence degree over $\C$ of the field $\C(y_0,\ldots,y_i,\mathscr{P}(y_0),e^{y_1},\ldots,e^{y_i})$ is at least $i+1+r$.
\end{proposition}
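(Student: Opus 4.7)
The plan is to apply Proposition \ref{prop:teo2_Kubota} (Brownawell-Kubota) to suitably chosen power series, and for now I would restrict to the isoconstant case $\omega_1,\omega_2\in\C$ (the setting for which the cited proposition directly applies and which is the focus of this paper). First I would pick a base point $p_0\in V$ and, using the algebraic independence of $x_1,\ldots,x_i$ on $V$, take $t_j:=x_j-x_j(p_0)$ as complex local coordinates on a neighbourhood of $p_0$. Expanding everything as power series in $\C[[t_1,\ldots,t_i]]$, I would set $y_j:=\log(x_j/x_j(p_0))$ for $j=1,\ldots,i$ and $y_0:=\log_E(P)-\log_E(P_0)$; all of them have no constant term.

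Next I would verify the hypotheses of Proposition \ref{prop:teo2_Kubota}. Linear independence of $y_1,\ldots,y_i$ over $\Q$ is immediate: a vanishing rational relation would, after clearing denominators and exponentiating, force a multiplicative identity $\prod_{j=1}^i x_j^{c_j}=\text{const}$ with $c_j\in\Z$ not all zero on $V$, contradicting the algebraic independence of $x_1,\ldots,x_i$. For $y_0\neq 0$, the hypothesis that $\Sigma=x_1+\cdots+x_n$ is non-constant on $V$, together with the fibre-product relation $f(P)=\Sigma(\bm{x})$, implies that the projection $V\to C$ is non-constant; combined with the harmless assumption that $s$ is not a constant section (otherwise the main theorem is trivial), this forces $\log_E(P)$ to be non-constant on $V$. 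Finally, the Jacobian $(\partial y_j/\partial t_k)$ at the origin contains the diagonal block $(\partial y_j/\partial t_k)_{1\leq j,k\leq i}=\mathrm{diag}(1/x_1(p_0),\ldots,1/x_i(p_0))$, so its rank is $r=i$. Proposition \ref{prop:teo2_Kubota} then yields
\begin{equation*}
\trdeg_{\C}\C(y_0,\ldots,y_i,\mathscr{P}(y_0),e^{y_1},\ldots,e^{y_i})\geq i+1+r=2i+1.
\end{equation*}

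To translate this into the desired statement, let $F_1:=\C(\log_E|_V,\log\circ p_1|_V,\ldots,\log\circ p_i|_V)$; then $\C(y_0,\ldots,y_i)=F_1$ since the generators differ by additive constants. Moreover $e^{y_j}=x_j/x_j(p_0)\in\C(x_j)$, and $\mathscr{P}(y_0)=\mathscr{P}(\log_E(P)-\log_E(P_0))$ is algebraic over $\C(\mathscr{P}(\log_E(P)))$ by the classical elliptic addition formula (shift by the constant $\log_E(P_0)$, together with the Weierstrass differential equation relating $\mathscr{P}'$ to $\mathscr{P}$), and $\mathscr{P}(\log_E(P))$ is in turn algebraic over $\C(P)$. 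On $V$ the remaining coordinates $x_{i+1},\ldots,x_n$ and the point $P$ itself are algebraic over $\C(x_1,\ldots,x_i)$, so the whole Brownawell-Kubota field embeds into the algebraic closure of $F_1(x_1,\ldots,x_i)$. Chaining inequalities,
\begin{equation*}
2i+1\leq\trdeg_{\C}F_1(x_1,\ldots,x_i)\leq i+\trdeg_{\C}F_1,
\end{equation*}
which gives $\trdeg_{\C}F_1\geq i+1$, as desired. The main obstacle I foresee is the non-isoconstant case, in which $\C(\omega_1|_V,\omega_2|_V)$ has positive transcendence degree over $\C$ and Proposition \ref{prop:teo2_Kubota} cannot be applied directly because its $\mathscr{P}$ has fixed periods; handling this would require either restricting $V$ to a subvariety along which the $j$-invariant is constant (so that the periods become scalars after an algebraic base change) and carefully lifting the resulting inequality back up, or replacing Brownawell-Kubota with a genuinely relative transcendence statement.
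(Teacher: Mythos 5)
The statement you were asked to prove \emph{is} the Brownawell--Kubota result itself: in the paper it is quoted from \cite{BrownawellKubota77} and never proved, and your proposal does not prove it either --- its very first sentence applies Proposition \ref{prop:teo2_Kubota} as a black box, so the argument is circular with respect to the statement. What you actually write out is the deduction the paper performs in the sentence immediately after the proposition, namely how the quoted result yields the constant-scheme case of Lemma \ref{lemm:grado_trascendenza_logaritmi}. The proposition itself is a power-series transcendence statement of Ax--Schanuel type mixing one Weierstrass logarithm with several ordinary logarithms; its proof in \cite{BrownawellKubota77} rests on differential-algebraic methods in the spirit of Ax and Kolchin together with the Weierstrass differential equation, none of which appear in your text. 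So, as a proof of the stated proposition, there is a complete gap: you would have to either reproduce (a version of) that argument or explicitly treat the statement as a citation, as the paper does.

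Even judged as a derivation of the application (where your write-up is in fact more detailed than the paper's one-line remark), one step is unjustified: you assert that on $V$ the remaining coordinates $x_{i+1},\ldots,x_n$ and the point $P$ itself are algebraic over $\C(x_1,\ldots,x_i)$, and this is precisely what makes $\mathscr{P}(y_0)$ algebraic over $F_1(x_1,\ldots,x_i)$ and closes the inequality $2i+1\leq i+\trdeg_{\C}F_1$. But $V$ is only an analytic ball inside $S$, not an algebraic subvariety, and restrictions of rational functions of the ambient variety to such a ball can be transcendental over the restricted coordinates (on the graph of $t\mapsto (t,e^{t})$ in $\C^{2}$, the second coordinate restricted to the graph is transcendental over the first). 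Hence the claimed algebraicity must either be imposed as an additional hypothesis or extracted from the particular way $V$ arises in the proof of Lemma \ref{prop:dimensione_intersezione_gamma_i}; as written, the chain of inequalities does not close. Your closing observation about the non-isoconstant case is accurate: the fixed-period hypothesis is exactly why the paper invokes Brownawell--Kubota only for constant schemes and switches to differential Galois theory otherwise.
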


Applying the above result with $y_{0}=\log_E|_{V}$ and $y_{j}=\log\circ p_{j}$, $j=1,\ldots, i$, we have $\mathscr{P}(y_{0})=\Sigma$ and $e^{y_{j}}=p_{j}$ for $j=1,\ldots, i$ and hence the result in the case of a constant elliptic curve.

\begin{proof}
	By the remark above, we may assume that the scheme $\mathcal{E}/C$ is not constant. Then we consider the following tower of differential fields: $K=\C(x_1,\ldots,x_n)$, $F=K(\omega_1,\omega_2,\eta_1,\eta_2)$ and $L=F(\log_{E},\zeta(\log_{E}))$, where $\eta_1,\eta_2$ are the semi periods and $\zeta$ is the Weierstrass zeta function associated to the elliptic scheme $\mathcal{E}/C$. It is known that the differential Galois group of the extension $F/K$ is $SL_2(\C)$. Moreover, from \cite{Bertrand89},
	%%%% Sarebbe bello trovare il riferimento esatto.
	we know that the differential Galois group $\mathcal{V}$ of $L$ over $F$ is isomorphic to ${\G_a}^2$. Now we observe that the Galois group of $K(\log\circ p_j)/K$ is ${\G_a}$ for any $j=1,\ldots,i$; since $\G_a$ is not a quotient of $SL_2(\C)$, we have that also the Galois group of $F(\log\circ p_j)/F$ is $\mathcal{W}_i=\G_a$.

	Since the coordinates $x_1,\ldots,x_i$ are algebraically independent, the Galois group of $F ( \log \circ p_1 , \ldots , \log\circ p_i  )$ over $F$ is $\mathcal{W}=\bigoplus_{j=1}^i\mathcal{W}_i={\G_a}^i$. Up to this point we have considered the following differential field extensions
	\begin{center}
		\begin{tikzpicture}[scale=1.5]
			%\draw[help lines] (-3,0) grid (3,4);
   			\node (K)		at		(0,0)		{$K=\C(x_1,\ldots,x_n )$};
   			\node (F)		at		(0,1)		{$F=K(\omega_1,\omega_2,\eta_1,\eta_2 )$};
   			\node (F1)		at		(1,2)		{$F(\log\circ p_1)$};
   			\node (pti)		at		(2,2)		{$\cdots$};
   			\node (Fi)		at		(3,2)		{$F(\log\circ p_i)$};
   			\node (L)		at		(-2,3)		{$L=F(\log_E,\zeta(\log_E))$};
   			\node (F1i)		at		(2,3)		{$F(\log\circ p_1,\dots,\log\circ p_i)$};
   			\node (L1i)		at		(0,4)		{$L(\log\circ p_1\ldots,\log\circ p_i)$};
   			\draw[-] 	(K)		edge	(F)
   						(F)		edge	(F1)
   						(F)		edge	(Fi)
   						(F)		edge	(L)
   						(F1)	edge	(F1i)
   						(Fi)	edge	(F1i)
   						(L)		edge	(L1i)
   						(F1i)	edge	(L1i);
		\end{tikzpicture}
	\end{center}
	These extensions give us the following exact sequences of differential Galois Groups.
	\begin{align}
		0\to& \mathcal{V}={\G_a}^2\to Gal(L/K)\to SL_2(\C)\to 0;\\
		0\to& \mathcal{W}_j={\G_a}\to Gal(F(\log\circ p_j)/K)\to SL_2(\C)\to 0.
	\end{align}
	Thus we have that the two abelian groups $\mathcal{V}={\G_a}^2$, $\mathcal{W}_j={\G_a}$ are $SL_2(\C)$-modules. Moreover, we know that the action of $SL_2(\C)$ is the standard one on $\mathcal{V}$ and the trivial one on each of the $\mathcal{W}_j$. This implies that the Galois group of  $L(\log(x_1),\ldots,\log(x_i))/F$ is a sub $SL_2(\C)$-module $\mathcal{T}$ of $\mathcal{V}\oplus \mathcal{W}_1\oplus\ldots\oplus\mathcal{W}_i={\G_a}^2\oplus {\G_a}^i$ that projects onto $\mathcal{V}$ and $\bigoplus_{j=1}^i \mathcal{W}_j$. The only such submodule is $\mathcal{T}=\mathcal{V}\oplus \mathcal{W}_1\oplus\ldots\oplus\mathcal{W}_i$. Thus we obtain:
	\begin{equation}
		\trdeg_{F}F(\log_{E},\zeta(\log_{E}),\log \circ p_1,\ldots,\log\circ p_i)=\dim \mathcal{T}=i+2.
	\end{equation}
	Which is exactly what we were looking for.
\end{proof}

Now we prove  Proposition \ref{prop:dimensione_intersezione_gamma_i}.

\begin{proof}[Proof of Proposition \ref{prop:dimensione_intersezione_gamma_i}]
	Let us suppose that the dimension of $\Gamma_1 \bigcap\ldots \bigcap \Gamma_{i+1}$ is strictly greater than $n-i-1$. Then we could find a subset $V$ of this intersection which is homoemorphic to an open ball in $\C^{n-i}$. Up to reordering coordinates, we can assume that $x_1,\ldots,x_{n-i}$ are algebraically independent on $V$; moreover, since the sum of coordinates is not constant on $\Gamma_1 \bigcap\ldots \bigcap \Gamma_{i+1}$, we can assume that the same holds on $V$. We can thus apply the previous Lemma and obtain that the transcendence degree of $\C(\omega_1|_{U},\omega_2|_{U},\log_E|_{U},\log\circ p_1|_{U},\ldots,\log\circ p_{n-i}|_{U})$ over $\C(\omega_1|_{U},\omega_2|_{U})$ is $n-i+1$. Finally we observe that $U\subset \Gamma_1 \bigcap\ldots \bigcap \Gamma_{i+1}$ and that this gives us $i+1$ independent algebraic relations between the functions $\log_{E}|_{U},\log(x_1)|_{U},\ldots,\log(x_{n})|_{U}$; thus the field extension analysed above would have transcendence degree at most $n-i$ which is absurd.
\end{proof}

We can now prove Proposition \ref{prop:sottoinsiemi_semialgebrici_logaritmo}.

\begin{proof}[Proof of Proposition \ref{prop:sottoinsiemi_semialgebrici_logaritmo}]
	Let $\mathscr{T}$ be a connected semialgebraic subset of dimension $\geq 1$ of $\mathscr{S}\subset \C^n\times\R^2$; if the projection of $\mathscr{T}$ on $\R^2$ was not constant, then we could find a semialgebraic curve $\mathscr{C}$ in $\mathscr{T}$ with the same property. Now, constructing $\Gamma$ and the $\Gamma_i$ as above, we would obtain that $\Gamma\subset \bigcap_{i=1}^n \Gamma_i$. Thus the intersection $\Gamma\subset \bigcap_{i=1}^n \Gamma_i$ would not have complex dimension zero as predicted by Proposition \ref{prop:dimensione_intersezione_gamma_i}.
\end{proof}

% subsection semialgebraic_subsets_of_the_logarithm (end)

\subsection{End of the Proof} % (fold)
\label{sub:end_of_the_proof}

Given $\zeta\in \C$; define $V_{\zeta}$ to be the inverse image of $\zeta$ in $C\times_{\mathbb{A}^{1}}\G_m^n$ 
\begin{equation}
	\begin{split}
		C\times_{\mathbb{A}^{1}}\G_m^n &\to \mathbb{A}^{1}\\ 
		V_{\zeta}&\mapsto \zeta.
	\end{split}
\end{equation}

\begin{proposition}
	\label{prop:scrittura_come_somma_di_radici_di_1}
	Let $(P, \bm{\varepsilon} )\in C\times_{\mathbb{A}^{1}}\mathbb{G}_{m}^{n}$ such that $\bm{\varepsilon}\in \mathbb{G}_{m}^{n}$ is torsion. Assume that, for all $I\subset \{1,\ \ldots,\ n\}$, the sum $\sum_{i\in I}\varepsilon_{i}$ is different from zero. Let $\zeta= \Sigma(\bm{\varepsilon})$. Let $H$ be an algebraic subgroup of $\mathbb{G}_{m}^{n}$ such that ${P}\times_{\mathbb{A}^{1}} \bm{\varepsilon}H\subset V_{\zeta}$. Then $H$ has dimension zero.
\end{proposition}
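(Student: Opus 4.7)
The plan is to assume toward contradiction that $\dim H \geq 1$ and exhibit a one-parameter subgroup of $H$ along which the map $\Sigma$ acquires a nonzero monomial in the variable of the subgroup, which in turn will force a partial sum of the $\varepsilon_i$ to vanish.

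More precisely, I would first unwind what the containment $\{P\}\times_{\mathbb{A}^{1}} \bm{\varepsilon}H \subset V_{\zeta}$ actually says. The fibre product $C\times_{\mathbb{A}^{1}}\mathbb{G}_{m}^{n}$ comes with its natural map to $\mathbb{A}^{1}$ sending $(Q,\bm{x})$ to $f(Q)=\Sigma(\bm{x})$, so $V_{\zeta}$ is cut out by the equation $\Sigma(\bm{x})=\zeta$. Because $P$ is fixed, the containment is equivalent to
\begin{equation}
	\sum_{i=1}^{n}\varepsilon_{i} h_{i}= \zeta = \sum_{i=1}^{n}\varepsilon_{i} \qquad \text{for all } h=(h_{1},\ldots,h_{n})\in H.
\end{equation}

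Next, suppose $H$ has positive dimension. Its identity component $H^{0}$ is then a subtorus of $\mathbb{G}_{m}^{n}$ of dimension at least one, hence admits a nontrivial cocharacter, i.e.\ a homomorphism $\chi:\mathbb{G}_{m}\to H^{0}$ of the form $\chi(t)=(t^{m_{1}},\ldots,t^{m_{n}})$ with $(m_{1},\ldots,m_{n})\in\mathbb{Z}^{n}\setminus\{0\}$. Specialising the identity above to $h=\chi(t)$ for $t\in\mathbb{G}_{m}$ yields the Laurent polynomial identity
\begin{equation}
	\sum_{i=1}^{n}\varepsilon_{i} t^{m_{i}}=\sum_{i=1}^{n}\varepsilon_{i} \qquad \text{in } \mathbb{C}[t,t^{-1}].
\end{equation}

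Finally, I would group terms according to the value of the exponent. For each integer $m$, set $I_{m}=\{i\in\{1,\ldots,n\} : m_{i}=m\}$. Comparing coefficients of $t^{m}$ on both sides, for every $m\neq 0$ we obtain $\sum_{i\in I_{m}}\varepsilon_{i}=0$. Since some $m_{i}$ is nonzero by construction, there is at least one value $m\neq 0$ with $I_{m}\neq\emptyset$, producing a nonempty subset $I\subset\{1,\ldots,n\}$ with $\sum_{i\in I}\varepsilon_{i}=0$, contradicting the hypothesis. Hence $\dim H =0$.

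There is no real obstacle here beyond correctly invoking the structure theorem that identifies the connected component of an algebraic subgroup of $\mathbb{G}_{m}^{n}$ with a subtorus and therefore guarantees nontrivial cocharacters; the remainder of the argument is the straightforward identification of coefficients of a Laurent polynomial.
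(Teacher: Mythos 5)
Your proof is correct. You and the paper both start from the same reduction, namely that the containment forces the affine relation $\sum_i \varepsilon_i h_i = \zeta$ to hold identically on $H$, but you then diverge. The paper keeps the whole group $H$, restricts the monomials $\bm{x}^{\bm{\lambda}_0},\ldots,\bm{x}^{\bm{\lambda}_n}$ (with $\bm{\lambda}_0=0$ absorbing the constant $-\zeta$) to $H$ as characters, and invokes Dedekind's linear independence of characters to conclude that the coefficient sums over equal-character classes must all vanish; the no-vanishing-subsum hypothesis then forces every character to be trivial, so $H=\{1\}$. You instead pass to the identity component $H^0$, pick a nontrivial cocharacter $t\mapsto(t^{m_1},\ldots,t^{m_n})$, and reduce to the identity $\sum_i \varepsilon_i t^{m_i}=\sum_i\varepsilon_i$ in $\mathbb{C}[t,t^{-1}]$, where comparing coefficients of some $t^m$ with $m\neq 0$ directly exhibits a vanishing nonempty subsum. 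This is a genuine, if minor, simplification: restricting to a one-parameter subgroup replaces the general character-independence lemma with the elementary fact that a Laurent polynomial has unique coefficients, and it sidesteps the need to choose a maximal $H$ (you only need a contradiction, not the full identification of $H$). The paper's version is closer to the standard Bombieri--Gubler formulation and would generalize more readily to $H$ not connected or to more complicated defining equations, but for a single affine-linear equation your cocharacter argument is cleaner. One small presentational point: when you say there is at least one $m\neq 0$ with $I_m\neq\emptyset$, it is worth noting explicitly that this $I_m$ is nonempty (so the hypothesis, which implicitly concerns nonempty $I$, applies), and that even if $I_m=\{1,\ldots,n\}$ the hypothesis still gives $\zeta\neq 0$, so the contradiction persists.
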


\begin{proof}
	Keeping notation as in the statement of the proposition; consider the subvariety
	\begin{equation}
		\tilde{V}_{\zeta}: x_{1}+\ldots+x_{n}=\zeta
	\end{equation}
	of $\mathbb{G}_{m}^{n}$. It is sufficient to prove that if $H$ is an algebraic subgroup of $\mathbb{G}_{m}^{n}$ such that $\bm{\varepsilon}H\subset \tilde{V}_{\zeta}$ then the dimension of $H$ is zero. 

	Let $H$ be a maximal subgroup of $\mathbb{G}_{m}^{n}$ such that $\bm{\varepsilon}H\subset \tilde{V}_{\zeta}$. 
	Then $H$ is the maximal subgroup of $\mathbb{G}_{m}^{n}$ contained in $\bm{\varepsilon}^{-1}\tilde{V}_{\zeta}$. The variety $\bm{\varepsilon}^{-1}  \tilde{V}_{\zeta}$ is given by the equation $\varepsilon_{1}x_{1}+\ldots+\varepsilon_{n}x_{n}=\zeta$. Following the proof of \cite[Propostion 3.2.14]{BombieriGubler06}, rewrite this equation as
	\begin{equation}
		a_{0} \bm{x}^{\bm{\lambda}_{0} }+\ldots + a_{n}\bm{x}^{\bm{\lambda}_{n} } =0,   
	\end{equation}
	where $\bm{\lambda}_{0}$ is the zero vector, $a_{0}=-\zeta$, $\bm{\lambda}_{i} $ is the $i$-th vector of the standard basis for $\mathbb{Z}^{n}$ and $a_{i}=\varepsilon_{i}$ for $1\leq i\leq n$.

	Given $0\leq i\leq n$, the monomial $\bm{x}^{\bm{\lambda}_{i}} $ restricted to $H$ is a character of $H$ into $\mathbb{C}$, which we will denote by $\chi_{i}$. Now, given a character $\chi$ of $H$, consider the set
	\begin{equation}
		\mathcal{L}_{\chi}=\left\{ 0\leq i\leq n \ \vert\ \chi_{i}=\chi \right\}.
	\end{equation}
	Since $H$ is contained in $\bm{\varepsilon}^{-1}\tilde{V}_{\zeta} $, we have the relation
	\begin{equation}
		\sum_{\chi}\left(\sum_{i\in \mathcal{L}_{\chi}}a_{i}\right)\chi = 0.
	\end{equation}
	By \cite[Theorem 4.1, Chapter VI]{Lang02}, this linear relation must be trivial. Hence each sum
	\begin{equation}
		\sum_{i\in \mathcal{L}_{\chi}}a_{i}=0.
	\end{equation}
	Since we assumed that no sub-sum $\sum_{i\in I}\varepsilon_{i}$ is zero,
	\begin{comment}
	$n$ is the least integer such that $\zeta$ is sum of $n$ roots of unity,
	\end{comment}
	the only possibility is that, for each character $\chi$, $\mathcal{L}_{\chi}=\left\{ 0,\ldots,n \right\}$. By definition of $\mathcal{L}_{\chi}$, we have that $\chi=\chi_{0}$ which is the trivial character. Hence the only character of $H$ is the trivial character and $H$ is the identity subgroup. 
\end{proof}

We now recall the Ax-Lindemann-Weierstrass theorem for the multiplicative group; see \cite{Orr15}.

\begin{theorem}[Ax-Lindemann-Weierstrass]
	\label{thm:Ax-Lindemann-Weierstrass}
	Let $\exp: \mathbb{C}^{n}\to \mathbb{G}_{m}^{n}$ be the exponential map. Let $\mathscr{T}\subset \mathbb{C}^{n}$ be a semialgebraic subset. Then the Zariski closure of $\exp(\mathscr{T})$ is a translate of an algebraic subgroup of $\mathbb{G}_{m}^{n}$.
\end{theorem}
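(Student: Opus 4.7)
The plan is to follow the Pila--Zannier paradigm, combining the Pila--Wilkie counting theorem (Theorem~\ref{thm:Pila-Wilkie}) with a functional-transcendence input analogous to the Brownawell--Kubota statement (Proposition~\ref{prop:teo2_Kubota}) used earlier, now applied in the purely multiplicative setting of $\G_m^n$. After passing to a connected component, I assume $\mathscr{T}$ is connected.

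Let $W$ denote the Zariski closure of $\exp(\mathscr{T})$ in $\G_m^n$ and let $H$ be the connected component of the identity in the algebraic stabiliser $\mathrm{Stab}_{\G_m^n}(W)$. Since $W$ is a finite union of cosets of $H$ and $\mathscr{T}$ is connected, the statement reduces to the equality $\dim H = \dim W$. I would argue by contradiction and descending induction on $\dim W - \dim H$: whenever this difference is strictly positive, I will exhibit a strictly larger algebraic subgroup $H' \supsetneq H$ still stabilising $W$, producing the desired contradiction.

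To find $H'$, I would introduce the $\R_{an,exp}$-definable set
\begin{equation*}
	\mathrm{Sh}(\mathscr{T}) = \bigl\{ g \in \C^n \ : \ g + \mathscr{T} \subseteq \exp^{-1}(W)\bigr\},
\end{equation*}
which is periodic under the lattice $\Lambda = 2\pi i \Z^n$ and so reduces to a definable subset of any fundamental domain $F$. The rational points of $\mathrm{Sh}(\mathscr{T}) \cap F$ correspond to torsion shifts of $\G_m^n$ preserving $W$; combining the $\Lambda$-action with the inclusion $\exp^{-1}(H) \cap F \subseteq \mathrm{Sh}(\mathscr{T}) \cap F$ supplies a polynomial-in-$T$ count of rational points of height at most $T$, and Theorem~\ref{thm:Pila-Wilkie} then forces $\mathrm{Sh}(\mathscr{T})^{\alg}$ to contain a connected positive-dimensional semialgebraic subset $Y$. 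Each $g \in Y$ yields a group element $\exp(g)$ preserving $W$ set-wise, so as $g$ varies in $Y$ the images $\exp(g)$ trace out a positive-dimensional real-analytic arc inside $\mathrm{Stab}_{\G_m^n}(W)$.

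The main obstacle will be the final upgrade: converting this real-analytic arc in $\mathrm{Stab}_{\G_m^n}(W)$ into a positive-dimensional complex algebraic subgroup $H' \supsetneq H$. The ingredient required here is an Ax-type functional-transcendence theorem for the exponential on $\G_m^n$, to the effect that a real-semialgebraic arc of shifts $g(t)$ with $\exp(g(t)) \cdot W \subseteq W$ must factor through the exponential of a $\Q$-rational linear subspace of $\C^n$, whose image in $\G_m^n$ is by definition an algebraic subgroup. Once $H'$ has been produced and shown to strictly contain $H$, the choice of $H$ as the full identity component of the stabiliser is contradicted, and the induction closes. Making the Pila--Wilkie counting step rigorous---uniformly over a definable family parameterised by the box size---and carrying out the transcendence upgrade cleanly in the multiplicative setting are the two technically delicate points of the argument.
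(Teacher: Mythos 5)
The paper does not prove this theorem; it quotes it from \cite{Orr15}, so there is no in-paper argument to compare against. Your sketch follows the Pila--Tsimerman / o-minimal route, which is a legitimate approach, but several steps as written would not go through. First, the claim that ``$W$ is a finite union of cosets of $H$'' is false in general; that is precisely what the theorem asserts, not something that follows from the definition of $H$. Second, the inductive scheme is incoherent: having defined $H$ to be the identity component of the full stabiliser $\mathrm{Stab}_{\mathbb{G}_m^n}(W)$, there cannot exist a connected algebraic subgroup $H'\supsetneq H$ still stabilising $W$. The standard fix is to pass to the quotient $\mathbb{G}_m^n/H$, where the image of $W$ has finite stabiliser, and then derive a contradiction by producing a \emph{nontrivial} stabiliser there.

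Third, and most seriously, the Pila--Wilkie counting step does not produce growth. The lattice $\Lambda=2\pi i\mathbb{Z}^n$ does not consist of rational points of $\mathbb{C}^n$; even after switching to Betti coordinates so that $\Lambda$ becomes $\mathbb{Z}^n$, the set $\mathrm{Sh}(\mathscr{T})$ contains $\mathbb{Z}^n$ and is therefore not definable in any o-minimal structure, while intersecting with a bounded fundamental-domain cell leaves only boundedly many lattice points, which gives no count that grows with $T$. The actual argument must first replace $\mathscr{T}$ by a \emph{maximal} irreducible complex-algebraic subvariety $Y\subseteq\exp^{-1}(W)$ containing it and then count lattice translates $\lambda$ for which $(\lambda+Y)$ meets a fixed compact set inside $\exp^{-1}(W)$; it is the positive (complex) dimension of $Y$ that produces polynomially many such $\lambda$, and maximality of $Y$ is what allows one to close the argument after Pila--Wilkie yields a semialgebraic family. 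Finally, the ``Ax-type functional transcendence theorem'' you flag as the delicate last step is in fact unnecessary in the multiplicative case: once $\exp(g(t))$ is a connected arc in the algebraic group $\mathrm{Stab}_{\mathbb{G}_m^n}(W)$, it lies in a coset of the identity component, which is automatically a subtorus, hence the image of a $\mathbb{Q}$-rational linear subspace. No transcendence input is needed there; the input is needed, if anywhere, in establishing properties of $Y$, where the paper's Proposition~\ref{prop:teo2_Kubota} (Brownawell--Kubota) in fact furnishes exactly the Ax--Schanuel-type statement required, so a more direct functional-transcendence proof, without Pila--Wilkie at all, would fit the paper's toolkit better.
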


As a consequence of the above results we have.

\begin{proposition}
\label{prop:sottoinsiemi_semialgebrici_V_zeta}
	Let $(P,\bm{\varepsilon})\in S\subset C\times_{\mathbb{A}^{1}}\G_m^n$ a point such that $\sigma(P,\bm{\varepsilon})$ is torsion. Assume that, for all $I\subset \{1,\ \ldots,\ n\}$ the sum $\sum_{i\in I}\varepsilon_{i}$, is different from zero. Let $(x_{1},x_{2}, \bm{z} )\in \mathscr{S}\subset \mathbb{R}^{2}\times \mathbb{C}^{n}$ a point which projects to $(P, \bm{\varepsilon} )$. Then $(x_{1},x_{2}, \bm{z} )$ does not lie in $\mathscr{S}^{alg}$.
\end{proposition}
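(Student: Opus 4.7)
The plan is to argue by contradiction. Suppose $(x_{1},x_{2},\bm{z})\in\mathscr{S}^{\alg}$; then there is a connected semialgebraic subset $\mathscr{T}\subseteq\mathscr{S}$ of positive real dimension containing it. First I would apply Proposition \ref{prop:sottoinsiemi_semialgebrici_logaritmo}: the $\R^{2}$-projection of $\mathscr{T}$ is constant equal to $(x_{1},x_{2})$, so $\mathscr{T}':=\pi_{\C^{n}}(\mathscr{T})\subseteq\C^{n}$ is a positive-dimensional connected semialgebraic set containing $\bm{z}$. Next, working inside a trivializing open $C_{j}$ of $(P,\bm{\varepsilon})$ on which $\theta=\theta_{C_{j}}$ is well-defined, I would let $\Gamma$ be the connected component of $\theta^{-1}(\mathscr{T})\cap S$ containing $(P,\bm{\varepsilon})$, a connected real-analytic subset of $S$.

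The crux of the argument is to show that $\pi_{C}(\Gamma)=\{P\}$. If not, $\pi_{C}(\Gamma)$ would contain a non-discrete subset of $C^{\an}$ on which the Betti coordinates $(b_{1},b_{2})$ of $s$ are constantly equal to $(x_{1},x_{2})$. Since $\sigma(P,\bm{\varepsilon})$ is torsion, $(x_{1},x_{2})\in\Q^{2}$ with common denominator $N$, so the algebraic section $N\cdot s$ of $\mathcal{E}/C$ would vanish on a non-discrete (hence Zariski-dense in the irreducible curve $C$) subset. This forces $N\cdot s\equiv 0$ identically, making $s$ constantly torsion, contradicting the hypothesis. Therefore $\pi_{C}(\Gamma)=\{P\}$, which gives $\Sigma(\bm{x}')=f(P)=\zeta$ for every $(P,\bm{x}')\in\Gamma$, and consequently $\exp(2\pi i\mathscr{T}')=\pi_{\G_{m}^{n}}(\Gamma)\subseteq\tilde{V}_{\zeta}$.

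Finally, I would apply the Ax-Lindemann-Weierstrass theorem (Theorem \ref{thm:Ax-Lindemann-Weierstrass}) to $\mathscr{T}'$ under the map $\bm{w}\mapsto\exp(2\pi i\bm{w})$: the Zariski closure of $\exp(2\pi i\mathscr{T}')$ in $\G_{m}^{n}$ is a coset $\bm{\varepsilon}H$ of an algebraic subgroup $H\leq\G_{m}^{n}$ (with base point $\bm{\varepsilon}=\exp(2\pi i\bm{z})$ since it lies in the image), and $\dim H\geq 1$ because $\mathscr{T}'$ has positive real dimension and $\exp$ is a local diffeomorphism. Passing to Zariski closures in $\exp(2\pi i\mathscr{T}')\subseteq\tilde{V}_{\zeta}$ then yields $\bm{\varepsilon}H\subseteq\tilde{V}_{\zeta}$; but Proposition \ref{prop:scrittura_come_somma_di_radici_di_1}, invoking the no-subset-sum-zero hypothesis on $\bm{\varepsilon}$, forces $\dim H=0$, a contradiction. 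The most delicate point is the analytic-continuation step in the middle paragraph, which is precisely where the hypothesis that $s$ is not constantly torsion enters.
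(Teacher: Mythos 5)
Your argument is correct and follows the same skeleton as the paper's proof: contradiction, Proposition \ref{prop:sottoinsiemi_semialgebrici_logaritmo} to make the $\R^2$-projection constant, containment of the exponential of the $\C^n$-projection in $\tilde{V}_{\zeta}$, Ax--Lindemann--Weierstrass, and Proposition \ref{prop:scrittura_come_somma_di_radici_di_1}. Where you genuinely diverge is the middle paragraph, and the divergence is to your credit. The paper justifies the inclusion $\exp(\tilde{\mathscr{T}})\subset\tilde{V}_{\zeta}$ only by asserting that the two compositions down to $\mathbb{A}^1$ coincide; but the composition through $\R^2$ does not literally factor through the Betti coordinates, since $(b_1,b_2)$ alone do not determine the base point $P'$, hence do not determine $f(P')$. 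Your observation --- that on the preimage the Betti coordinates are constantly equal to the rational point $(x_1,x_2)$, so $N\cdot s$ vanishes there, and non-discreteness of the $C$-projection would force $s$ to be constantly torsion --- is exactly the missing justification, and it is indeed the place where the hypothesis on $s$ enters. One small imprecision: you assert $\exp(2\pi i\mathscr{T}')=\pi_{\G_m^n}(\Gamma)$, but a single connected component $\Gamma$ of the preimage need not surject onto $\mathscr{T}$, so a priori you only control part of $\mathscr{T}'$. The clean fix is to note that \emph{every} preimage in $S$ of \emph{every} point of $\mathscr{T}$ projects into the finite set $\{Ns=O\}$ in $C$, so the continuous function $q\mapsto\Sigma(\exp(2\pi i\,\pi_{\C^n}(q)))=f(P')$ takes finitely many values on the connected set $\mathscr{T}$ and is therefore constant, equal to $\zeta$. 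With that adjustment the rest of your argument (the coset based at $\bm{\varepsilon}$, positivity of $\dim H$ from $\exp$ being a local diffeomorphism) goes through and matches the paper's conclusion.
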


\begin{proof}
	Assume there is a connected semialgebraic subset $\mathscr{T}\subset \mathscr{S}$ containing $(x_{1},x_{2}, \bm{z} )$. By Proposition \ref{prop:sottoinsiemi_semialgebrici_logaritmo}, the projection of $\mathscr{T}$ to $\mathbb{R}^{2}$ is constant. Let $\tilde{\mathscr{T}}$ be the projection of $\mathscr{T}$ to $\mathbb{C}^{n}$. By definition of $\mathscr{S}$, the two compositions
	\begin{equation}
		\begin{split}
			\mathscr{S}\subset \mathbb{R}^{2}&\times \mathbb{C}^{n}\to \mathbb{R}^{2}\frecciadx{\exp}{} \mathcal{E}\frecciadx{\pi}{} C \frecciadx{f}{} \mathbb{A}^{1},\\
			\mathscr{S}\subset \mathbb{R}^{2}&\times \mathbb{C}^{n}\to \mathbb{C}^{n}\frecciadx{\exp}{} \mathbb{G}_{m}^{n} \frecciadx{\Sigma}{} \mathbb{A}^{1}
		\end{split}
	\end{equation}
	are equal. Then $\exp(\tilde{\mathscr{T}})\subset \mathbb{G}_{m}^{n}$ is contained in
	\begin{equation}
		\tilde{V}_{\zeta}:\ x_{1}+\ldots+x_{n}=\zeta.
	\end{equation}
	By Theorem \ref{thm:Ax-Lindemann-Weierstrass}, the Zariski closure of $\exp{\tilde{\mathscr{T}}}$ is a translate of an algebraic subgroup, which, by what we have said above, is contained in $\tilde{V}_{\zeta}$ and contains $\bm{\varepsilon}$. 
	Since we assumed that no sub-sum $\sum_{i\in I}\varepsilon_{i}$ is zero, by Proposition \ref{prop:scrittura_come_somma_di_radici_di_1}, $\tilde{V}_{\zeta}$ does not contain any positive dimensional translate of an algebraic subgroup of $\mathbb{G}_{m}$ containing $\bm{\varepsilon}$. Hence $\mathscr{T}$ has dimension zero and, since it is connected, it reduces to a point.
\end{proof}

We can now finish the proof of our main result. Let $n\in \mathbb{N}$ and $\mathcal{L}_n$ be the set of points $(P, \bm{\varepsilon} )\in C\times_{\mathbb{A}^{1}}\mathbb{G}_{m}^{n}$ such that $\sigma(P, \bm{\varepsilon} )$ is torsion and, for all $I\subset \{1,\ \ldots,\ n\}$, the sum $\sum_{i\in I}\varepsilon_{i}$ is different from zero. Then, it is sufficient to show that the set $\mathcal{L}_n$ is finite.

First we recall that for each $(P,\bm{x})\in S$ such that $\sigma(P,\bm{x})$ is torsion, $\theta(P,\bm{x})\in \mathscr{S}$ is rational. Moreover, since $S$ is compact, $\mathscr{S}$ is bounded; thus there is some number $c_1$ such that $H(\theta(P,\bm{x}))\leq c_1 T$. If $(P,\bm{\varepsilon})\in C\times_{\mathbb{A}^{1}}\G_m^n$ is a torsion point such that $\sigma(P,\bm{\varepsilon})$ is torsion of order $T$, then, from Corollary \ref{cor:grado_punti_torsione} and from the definition of $S$, we have that there are two numbers $c_2,c_3>0$ such that $S$ contains at least $1/2\, c_2 T^{c_3}$ points with the same property. Let $0<c_4<c_3$; by Pila-Wilkie's Theorem, we have that there is a number $c_5$ such that $N(\mathscr{S}\setminus \mathscr{S}^{\alg},T)<c_5 T^{c_4}$. From what we have proven above, we have that if $(P,\bm{\varepsilon})\in S$ is in $\mathcal{L}_n$, then $\theta(P,\bm{\varepsilon})\in \mathscr{S}\setminus\mathscr{S}^{\alg}$. 

Thus we have that if there is some torsion point in $(P,\bm{\varepsilon})\in C\times_{\mathbb{A}^{1}}\G_m^n$ contained in $\mathcal{L}_n$ and such that $\sigma(P,\bm{\varepsilon})$ has order $T$, then 
\begin{equation}
	1/2\, c_2 T^{c_3}<N(\mathscr{S}\setminus \mathscr{S}^{\alg},c_1 T)<c_6 T^{c_4}
\end{equation}
where $c_6=c_5 c_1^{c_4}$. Since we have chosen $c_{3}> c_{4}$, this last inequality limits the maximum order for $\sigma(P,\bm{\varepsilon})$; since this order is at least the order of $\bm{\varepsilon}$ and $f:U\to \mathbb{A}^{1}$ is a finite map, we have proven the result.

% subsection end_of_the_proof (end)

% section proof_of_the_main_result (end)

\printbibliography[heading=bibintoc]

\end{document}